\newcommand{\Z}{\mathbb{Z}}
\newcommand{\C}{\mathbb{C}}
\newcommand{\bT}{\mathbb{T}}
\newcommand{\bc}{\mathbf{c}}
\newcommand{\bg}{\mathbf{g}}
\newcommand{\bw}{\mathbf{w}}
\newcommand{\bv}{\mathbf{v}}
\newcommand{\bP}{\mathbf{P}}
\newcommand{\be}{\mathbf{e}}
\newcommand{\sT}{\mathsf{T}}
\newcommand{\sICFT}{\mathsf{T}'}
\newcommand{\GL}{\mathrm{GL}}
\newcommand{\Db}{D^{\mathrm{b}}}
\newcommand{\cF}{\mathcal{F}}
\newcommand{\cO}{\mathcal{O}}
\newcommand{\cI}{\mathcal{I}}
\newcommand{\cJ}{\mathcal{J}}
\newcommand{\cK}{\mathcal{K}}
\newcommand{\id}{\mathrm{id}}
\newcommand{\Rep}{\mathrm{Rep}}
\newcommand{\Hom}{\mathrm{Hom}}
\newcommand{\Fl}{\mathrm{Fl}}
\newcommand{\IC}{\mathrm{IC}}
\newcommand{\tE}{\widetilde{E}}
\newcommand{\Delc}{\mathsf{Del}^{\scriptscriptstyle\searrow}}
\newcommand{\Dell}{\mathsf{Del}_{\scriptscriptstyle\nearrow}}
\newcommand{\Top}{\mathsf{Top}}
\newcommand{\Raise}{\mathsf{Raise}}
\newcommand{\Lower}{\mathsf{Lower}}
\newcommand{\cupc}{\cup^{\scriptscriptstyle\searrow}}
\newcommand{\cupl}{\cup_{\scriptscriptstyle\nearrow}}
\newcommand{\sa}{\mathsf{a}}
\newcommand{\sA}{\mathsf{A}}
\newcommand{\sB}{\mathsf{B}}
\DeclareMathOperator{\udim}{\underline{\dim}}
\DeclareMathOperator{\tr}{tr}
\DeclareMathOperator{\spn}{span}
\newcommand{\stri}[6]{\hbox{$\vcenter{\hbox{\tiny\begin{tikzpicture}[scale=0.16]
\draw (0,3.3) -- (5,0.3) -- (0,-2.7) -- cycle;
\node (11) at (.70,1.5) {#1};
\node (21) at (.70,0) {#4};
\node (31) at (.70,-1.5) {#6};
\node (12) at (2.20,.75) {#2};
\node (22) at (2.20,-.75) {#5};
\node (13) at (3.70,0) {#3};
\end{tikzpicture}}}$}}
\tikzstyle{mathy}=[nodes={%
\newcommand{\triddots}{\rotatebox[origin=c]{-27}{$\cdots$}}
\newcommand{\triudots}{\rotatebox[origin=c]{27}{$\cdots$}}
\theoremstyle{plain}
\newtheorem{theorem}{Theorem}[section]
\newtheorem{lemma}[theorem]{Lemma}
\newtheorem{proposition}[theorem]{Proposition}
\newtheorem{corollary}[theorem]{Corollary}
\theoremstyle{definition}
\newtheorem{definition}[theorem]{Definition}
\theoremstyle{remark}
\newtheorem{remark}[theorem]{Remark}
\numberwithin{equation}{section}
\title[Combinatorial Fourier transform]{Combinatorics of Fourier transforms for\\
type A quiver representations}
\author{Pramod N. Achar}
\address{Department of Mathematics, Louisiana State University, Baton Rouge, LA 70803.}
\email{pramod@math.lsu.edu}
\author{Maitreyee C. Kulkarni}
\address{Department of Mathematics, Louisiana State University, Baton Rouge, LA 70803.}
\email{mkulka2@lsu.edu}
\author{Jacob P. Matherne}
\address{Department of Mathematics and Statistics, University of Massachusetts, Amherst, MA 01003.}
\email{matherne@math.umass.edu}
\thanks{P.A. received support from NSF Grant No.~DMS-1500890. M.K. received support from NSF Grant No.~DMS-1601862.  J.M. received support from a Department of Education GAANN fellowship (Grant No.~P200A120001). }
\subjclass[2010]{16G20 (Primary); 05E10 (Secondary)}
\begin{document}

\begin{abstract}
We describe two new combinatorial algorithms (using the language of ``triangular arrays'') for computing the Fourier transforms of simple perverse sheaves on the moduli space of representations of an equioriented quiver of type $A$. (A rather different solution to this problem was previously obtained by Knight--Zelevinsky.) Along the way, we also show that the closure partial order and the dimensions of orbits have especially concise descriptions in the language of triangular arrays.
\end{abstract}

\maketitle

\section{Introduction}
\label{sec:intro}

Let $Q_n$ be the following quiver, with $n$ vertices and $n-1$ arrows:
\begin{equation}\label{eqn:quiver}
\bullet\longrightarrow\bullet\longrightarrow \cdots \longrightarrow \bullet
\end{equation}
Given a dimension vector $\bw \in \Z^n_{\ge 0}$, 
let $E(\bw)$ be the moduli space of representations of $Q_n$ of dimension vector $\bw$.  (See Section~\ref{sec:prelim} for additonal background, definitions, and notation.)

This paper is the result of the authors' attempts to do exercises with perverse sheaves on $E(\bw)$, and specifically to compute Fourier--Sato transforms by hand. These exercises led to combinatorial objects  called \emph{triangular arrays}.  
Using the language of triangular arrays, we describe:
\begin{enumerate}
\item the closure partial order on orbits in $E(\bw)$ (Theorem~\ref{thm:po})
\item a dimension formula for orbits in $E(\bw)$ (Theorem~\ref{thm:dim})
\item two new combinatorial algorithms for computing Fourier--Sato transforms of simple perverse sheaves (Theorem~\ref{thm:main} and Corollary~\ref{cor:main})
\end{enumerate}
All of these problems have been previously solved in the language of \emph{multisegments}, also called \emph{Kostant partitions}~\cite{adf, lusztig, kz:rqa, b:cbqfm} (see also~\cite{bg:psrs}).  Nevertheless, we hope to convince the reader that the language of triangular arrays is worth studying:
\begin{itemize}
\item The closure partial order is especially easy in this language (it is the ``chutewise dominance order''), and the dimension formula is also very concise.
\item The combinatorics of the Fourier--Sato transform in this paper looks very different from the ``multisegment duality'' of~\cite{kz:rqa}.  (Indeed, we were unable to find an elementary relationship between the two.)  Perhaps our algorithms will be useful in situations where~\cite{kz:rqa} is difficult to apply.
\end{itemize}
For examples of triangular arrays, see Figures~\ref{fig:partialorder} and~\ref{fig:fourier}. Figure~\ref{fig:partialorder} shows the partial order and dimensions of orbits for the dimension vector $\bw = (3,3,3)$, and Figure~\ref{fig:fourier} shows the involution on this set of orbits induced by the Fourier--Sato transform.

\begin{figure}
\[
\begin{tikzcd}[row sep=small]
18 &&&\stri003033 \\
17 && \stri012123 \ar[ur, dash] && \stri102033 \ar[ul,dash] \\
16 &&& \stri102123 \ar[ul, dash] \ar[ur, dash] \\
15 &&& \stri111123 \ar[u, dash] \\
14 & \stri021213 \ar[uuur, dash] &&&& \stri201033 \ar[uuul, dash] \\
13 && \stri111213 \ar[ul, dash] \ar[uur, dash] && \stri201123 \ar[ur, dash] \ar[uul, dash] \\
11 && \stri120213 \ar[u, dash] && \stri210123 \ar[u,dash] \\
10 &&& \stri201213 \ar[uul, dash] \ar[uur, dash] \\
 9 & \stri030303 \ar[uuuu, dash] && \stri210213 \ar[uul, dash] \ar[u, dash] \ar[uur, dash] && \stri300033 \ar[uuuu, dash] \\
 8 & \stri120303 \ar[u, dash] \ar[uuur, dash] &&&& \stri300123 \ar[u, dash] \ar[uuul, dash] \\
 5 && \stri210303 \ar[ul, dash] \ar[uur, dash] && \stri300213 \ar[ur, dash] \ar[uul, dash] \\
 0 &&& \stri300303 \ar[ul, dash] \ar[ur, dash]
\end{tikzcd}
\]
\caption{Partial order and dimensions for $\bw = (3,3,3)$}\label{fig:partialorder}
\end{figure}

\begin{figure}
\[
\begin{tikzcd}[row sep=small]
&&\stri003033
\ar[ddddddddddd, leftrightarrow, in=30, out=-30, distance=35] \\
& \stri012123 \ar[ddddddddd, leftrightarrow, in=150, out=-150, distance=30] && \stri102033 \ar[ddddddddd, leftrightarrow, in=30, out=-30, distance=30]  \\
&& \stri102123 \ar[ddddd, leftrightarrow, in=150, out=-150, distance=25] \\
&& \stri111123 \ar[ddddd, leftrightarrow, in=30, out=-30, distance=25]  \\
\stri021213  \ar[ddddd, leftrightarrow, in=165, out=-165, distance=25] &&&& \stri201033 \ar[ddddd, leftrightarrow, in=15, out=-15, distance=25]  \\
& \stri111213 \ar[loop, leftrightarrow, in=-165, out=165, distance=15] && \stri201123 \ar[loop, leftrightarrow, in=-15, out=15, distance=15] \\
& \stri120213 \ar[loop, leftrightarrow, in=-165, out=165, distance=15] && \stri210123 \ar[loop, leftrightarrow, in=-15, out=15, distance=15] \\
&& \stri201213 \\
\stri030303 \ar[loop, leftrightarrow, in=-165, out=165, distance=15] && \stri210213 && \stri300033 \ar[loop, leftrightarrow, in=-15, out=15, distance=15] \\
\stri120303 &&&& \stri300123 \\
& \stri210303 && \stri300213 \\
&& \stri300303
\end{tikzcd}
\]
\caption{Fourier--Sato transforms for $\bw = (3,3,3)$}\label{fig:fourier}
\end{figure}
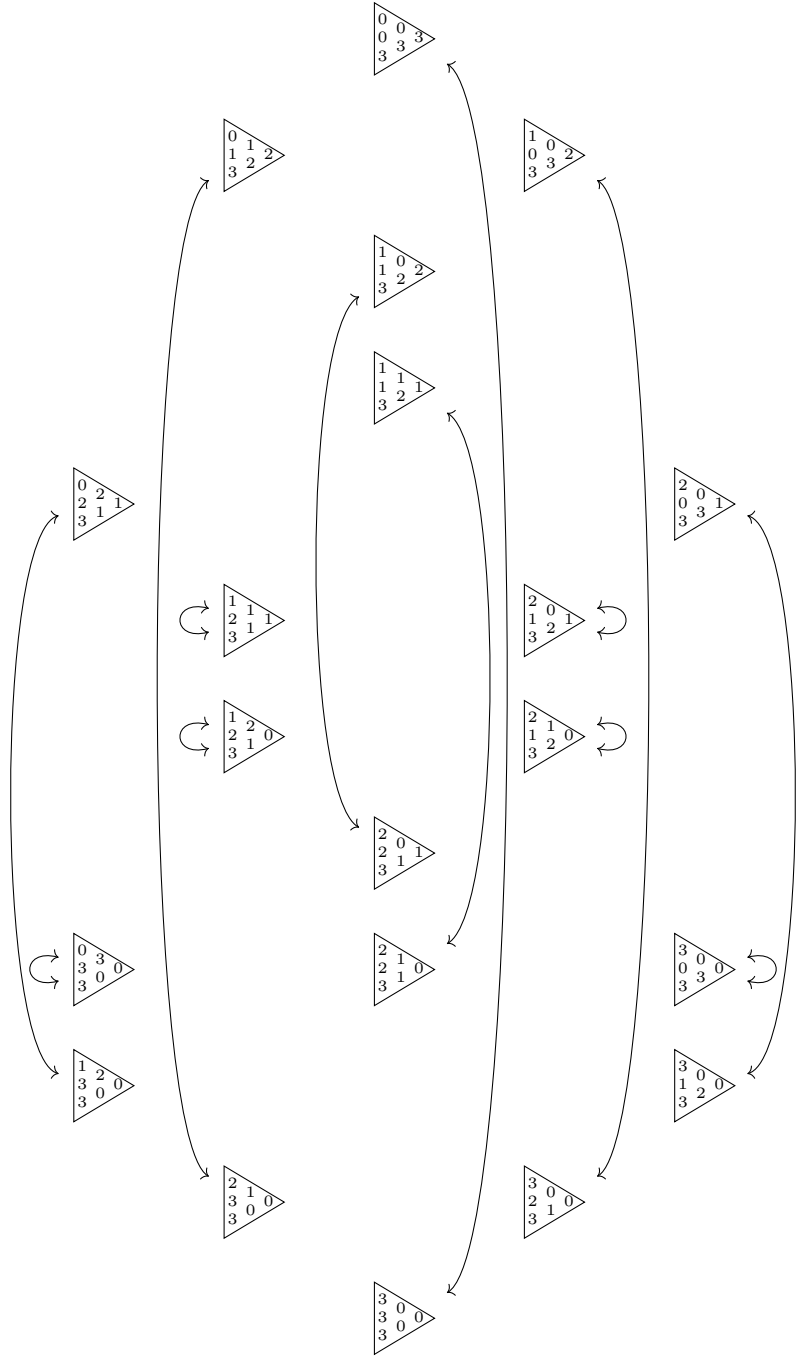

The paper is organized as follows: Section~\ref{sec:prelim} defines triangular arrays, and fixes notation related to quiver representations.  In Sections~\ref{sec:partialorder} and~\ref{sec:dimension}, we determine the closure partial order and the dimensions of orbits in terms of triangular arrays.

The main new content of the paper is in Sections~\ref{sec:operations} and~\ref{sec:geometry}.  Section~\ref{sec:operations} contains the definitions of two combinatorial operations on triangular arrays, denoted by $\sT$ and $\sICFT$.  That section also contains the proof that $\sT$ and $\sICFT$ are both bijections.  In Section~\ref{sec:geometry}, we prove that both $\sT$ and $\sICFT$ compute the Fourier--Sato transform for simple perverse sheaves on $E(\bw)$.  (In particular, the geometry shows that the maps $\sT$ and $\sICFT$ coincide. We do not know a combinatorial proof of this fact.)

\subsection*{Acknowledgments}

We are grateful to Pierre Baumann, Tom Braden, Thomas Br\"ustle, Lutz Hille, Ivan Mirkovi\'{c}, Laura Rider, Ralf Schiffler, and Catharina Stroppel for helpful conversations while this work was in progress.

\section{Notation and preliminaries}
\label{sec:prelim}

\subsection{Triangular arrays}\label{sec:triarr}

Let $\bw = (w_1, \ldots, w_n)$ be an $n$-tuple of nonnegative integers.  Given such a $\bw$, we define $\bP(\bw)$ to be the following set of collections of nonnegative integers:
\[
\bP(\bw) = \left\{ (y_{ij})_{1 \le i \le n, 1 \le j \le n-i+1} \,\Big|\,
\begin{array}{c}
\text{$\sum_{j=1}^{n-i+1} y_{ij} = w_i$ for all $i$, and} \\
\text{$y_{ij} \ge y_{i-1,j+1}$ for all $i$ and $j$}
\end{array} \right\}.
\]
An element $Y \in \bP(\bw)$ is called a \emph{triangular array of size $n$}. It can be drawn as follows:
\begin{equation}\label{eqn:tri-defn}
Y = \vcenter{\hbox{\scalebox{0.75}{\begin{tikzpicture}[scale=0.5,mathy]
\draw (0,3.67) -- (6.67,0) -- (0,-3.67) -- cycle;
\node (11) at (.70,2.25) {y_{11}};
\node at (.70,0) {\vdots};
\node (41) at (.70,-2.25) {y_{n1}};
\node (12) at (2.20,1.5) {y_{12}};
\node (22) at (2.20,0) {\vdots};
\node (32) at (2.20,-1.5) {y_{n-1,2}};
\node (13) at (3.70,.75) {\triddots};
\node (23) at (3.70,-.75) {\triudots};
\node (14) at (5.20,0) {y_{1n}};
\end{tikzpicture}}}}
\end{equation}
We will refer to portions of this diagram as \emph{columns}, \emph{chutes}, and \emph{ladders}:
\[
\begin{array}{c}
\text{$j$th} \\
\text{column:}
\end{array}
\vcenter{\hbox{\scalebox{0.6}{\begin{tikzpicture}[scale=0.5,mathy]
\draw (0,4.37) -- (8.16,0) -- (0,-4.37) -- cycle;
\node (12) at (2.20,2.25) {y_{1j}};
\node (22) at (2.20,0.75) {y_{2j}};
\node (32) at (2.20,-0.75) {\vdots};
\node (42) at (2.20,-2.25) {y_{n-j+1,j}};
\end{tikzpicture}}}}
\quad
\begin{array}{c}
\text{$i$th} \\
\text{chute:}
\end{array}
\vcenter{\hbox{\scalebox{0.6}{\begin{tikzpicture}[scale=0.5,mathy]
\draw (0,4.37) -- (8.16,0) -- (0,-4.37) -- cycle;
\node (21) at (.70,1.5) {y_{i1}};
\node (22) at (2.20,0.75) {y_{i2}};
\node (23) at (3.70,0) {\triddots};
\node (24) at (5.20,-.75) {y_{i,n+1-i}};
\end{tikzpicture}}}}
\quad
\begin{array}{c}
\text{$k$th} \\
\text{ladder:}
\end{array}
\vcenter{\hbox{\scalebox{0.6}{\begin{tikzpicture}[scale=0.5,mathy]
\draw (0,4.37) -- (8.16,0) -- (0,-4.37) -- cycle;
\node (41) at (.70,-1.5) {y_{k1}};
\node (32) at (2.20,-0.75) {\triudots};
\node (23) at (3.70,0) {y_{2,k-1}};
\node (14) at (5.20,.75) {y_{1k}};
\end{tikzpicture}}}}
\]
With these notions, we can rephrase the definition of $\bP(\bw)$ as follows: it is the set of diagrams of nonnegative integers as in~\eqref{eqn:tri-defn} that:
\begin{itemize}
\item have chute-sums given by $\bw$, and
\item are weakly decreasing (from left to right) along ladders.
\end{itemize}
For $Y \in \bP(\bw)$, we call $\bw$ the \emph{dimension vector} of $Y$, and we write
\[
\udim(Y) = \bw.
\]
Now let $Y = (y_{ij})$ and $Y' = (y'_{ij})$ be two elements of $\bP(\bw)$.  We equip $\bP(\bw)$ with a partial order $\le_\bc$ by declaring that
\begin{equation}\label{eqn:comb-order-defn}
Y \le_\bc Y' 
\qquad
\text{if for all $i$ and $j$,}
\qquad
\sum_{k=1}^j y_{ik} \ge \sum_{k=1}^j y'_{ik}.
\end{equation}
The condition~\eqref{eqn:comb-order-defn} resembles the usual dominance order on partitions, but each inequality involves only entries from a single chute.  For this reason, we call $\le_\bc$ the ``chutewise dominance order.''

\subsection{Moduli spaces of quiver representations}
\label{ss:quiver-rep}

Recall from Section~\ref{sec:intro} that $Q_n$ denotes the quiver~\eqref{eqn:quiver} with $n$ vertices and $n-1$ arrows.  Let $\Rep(Q_n)$ denote the category of finite-dimensional complex representations of $Q_n$.  Given an object 
\[
M = (M_1 \xrightarrow{x_1} M_2 \xrightarrow{x_2} \cdots \xrightarrow{x_{n-1}} M_n)
\]
in $\Rep(Q_n)$, we denote by $\udim M$ its dimension vector:
\[
\udim M = (\dim M_1, \dim M_2, \ldots, \dim M_n) \in \Z_{\ge 0}^n.
\]

Given $\bw = (w_1, \ldots, w_n) \in \Z_{\ge 0}^n$, let $E(\bw)$ be the moduli space of representations of $Q_n$ with dimension vector $\bw$.  Explicitly, we put
\[
E(\bw) = \Hom(\C^{w_1}, \C^{w_2}) \times \Hom(\C^{w_2},\C^{w_3}) \times \cdots \times \Hom(\C^{w_{n-1}},\C^{w_n}).
\]
Given $x = (x_1, \ldots, x_{n-1}) \in E(\bw)$, let $M(x)$ denote the quiver representation
\[
M(x) = (\C^{w_1} \xrightarrow{x_1} \C^{w_2} \xrightarrow{x_2} \cdots \xrightarrow{x_{n-1}} \C^{w_n}).
\]
(The point $x$ and the object $M(x)$ consist of the same data, but we think of $x$ as a point in an algebraic variety, and $M(x)$ as an object of an abelian category.)

The variety $E(\bw)$ is just an affine space of dimension $w_1w_2 + w_2w_3 + \cdots + w_{n-1}w_n$.  It is equipped with an action of the group
\[
G(\bw) = \GL(w_1) \times \GL(w_2) \times \cdots \times \GL(w_n)
\]
given by the formula $(g_1,\ldots,g_n) \cdot (x_1,\ldots,x_{n-1}) = (g_2x_1g_1^{-1}, \ldots, g_nx_{n-1}g_{n-1}^{-1})$.  Two points $x, y \in E(\bw)$ lie in the same $G(\bw)$-orbit if and only if $M(x)$ and $M(y)$ are isomorphic objects of $\Rep(Q_n)$.

Let us recall the classification of indecomposable objects in $\Rep(Q_n)$.  For $k = 1, \ldots, n$, let $\be_k$ be the dimension vector
\[
\be_k = (\underbrace{0,\ldots,0}_{\text{$k-1$ entries}},1,\underbrace{0,\ldots,0}_{\text{$n-k$ entries}}).
\]
Then, for $1 \le i \le j \le n$, let
\[
\gamma_{ij} = \be_i + \be_{i+1} + \cdots + \be_j.
\]
The $\gamma_{ij}$ can be identified with the positive roots in a root system of type $A_n$.  (The $\be_k$ are then identified with the simple roots.)

Gabriel's theorem~\cite{gab:ud1} says that the indecomposable objects in $\Rep(Q_n)$ are classified by their dimension vectors, and the vectors that occur as dimension vectors of indecomposable objects are precisely the positive roots.  Given integers $1 \le i \le j \le n$, let $R_{ij}$ be the quiver representation given by
\[
R_{ij} = 0 \to \cdots \to 0 \to \underset{\text{vertex $i$}}{\C} \xrightarrow{\id} \cdots \xrightarrow{\id} \underset{\text{vertex $j$}}{\C} \to 0 \to \cdots \to 0.
\]
Its dimension vector is $\gamma_{ij}$.  The $R_{ij}$ exhaust the isomorphism classes of indecomposables.

Consider the set
\[
\textstyle
B(\bw) = \left\{ (b_{ij})_{1 \le i \le j \le n} \mid \sum b_{ij} \gamma_{ij} = \bw \right\}.
\]
Gabriel's theorem implies that there is a canonical bijection
\begin{equation}\label{eqn:gabriel}
\{G(\bw)\text{-orbits on } E(\bw)\} \stackrel{1-1}{\longleftrightarrow} B(\bw).
\end{equation}

\begin{lemma}\label{lem:pb-bij}
There is a bijection
$\nu: \bP(\bw) \stackrel{1-1}{\longrightarrow} B(\bw)$.
\end{lemma}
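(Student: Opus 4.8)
The plan is to write $\nu$ and its inverse down explicitly; both are ``partial sum'' and ``difference'' operations taken along ladders, and they are visibly mutually inverse as maps between the ambient sets of $\Z$-valued triangular shapes, so the real content is just matching up the two lists of defining conditions. Fix $\bw$, extend a triangular array $Y = (y_{ij})$ by the convention $y_{0,j} = 0$, and define $\nu(Y) = (b_{ij})_{1 \le i \le j \le n}$ by differencing along ladders,
\[
b_{ij} = y_{i,\,j-i+1} - y_{i-1,\,j-i+2},
\]
so that $b_{ij}$, sitting at the slot of the chute-$i$ entry $y_{i,\,j-i+1}$, records the drop from that entry to the next one along its ladder. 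The candidate inverse $\mu \colon B(\bw) \to \bP(\bw)$ is summation along the initial segment of a ladder,
\[
\mu(b)_{ij} = \sum_{a=1}^{i} b_{a,\,i+j-1}.
\]

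First I would check that $\mu$ really lands in $\bP(\bw)$. Nonnegativity of $\mu(b)_{ij}$ is immediate; the ladder inequality holds because $\mu(b)_{ij} - \mu(b)_{i-1,\,j+1} = b_{i,\,i+j-1} \ge 0$; and the chute-$i$ sum of $\mu(b)$ is $\sum_{j} \sum_{a \le i} b_{a,\,i+j-1} = \sum_{a \le i} \sum_{\beta \ge i} b_{a\beta} = w_i$, using that the condition $\sum b_{ij}\gamma_{ij} = \bw$ defining $B(\bw)$ says exactly $\sum_{a \le k \le \beta} b_{a\beta} = w_k$ for every $k$. Symmetrically I would check $\nu(Y) \in B(\bw)$: the inequality $b_{ij} \ge 0$ is literally the ladder condition on $Y$, and $\sum_{i \le k \le j} b_{ij} = w_k$ (equivalently $\sum b_{ij}\gamma_{ij} = \bw$) follows from a telescoping sum that collapses, for each $k$, to the chute-$k$ sum of $Y$ minus a sum over the zero row $i = 0$, hence to $w_k$.

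Finally, $\nu \circ \mu = \id$ and $\mu \circ \nu = \id$ are two more one-line telescopings: $\nu(\mu(b))_{ij} = \sum_{a \le i} b_{aj} - \sum_{a \le i-1} b_{aj} = b_{ij}$, while substituting the differencing formula into the summation formula makes the ladder-sum of ladder-differences collapse back to $y_{ij}$ (using $y_{0,\,\cdot} = 0$ at the bottom end). There is no genuine obstacle; the only thing that needs care is that $\bP(\bw)$ labels an entry by its chute number $i$ together with a position $1 \le j \le n-i+1$ inside that chute, whereas $B(\bw)$ labels by a pair $1 \le i \le j \le n$, so the bookkeeping step is keeping the shift $j \leftrightarrow i+j-1$ and the index ranges straight, to be sure every telescoping sum has exactly the endpoints claimed.
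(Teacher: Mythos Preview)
Your proposal is correct and takes exactly the same approach as the paper: the paper defines $\nu$ and its inverse $\bar\nu$ by the identical formulas you give for $\nu$ and $\mu$, and then simply asserts that ``straightforward computations show that $\nu$ and $\bar\nu$ are both well-defined, and that they are inverse to each other.'' Your write-up fills in precisely those straightforward computations, so it is the paper's proof with the details spelled out.
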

\begin{proof}
Given $Y \in \bP(\bw)$, let $\nu(Y)$ be the element of $B(\bw)$ given by
\[
\nu(Y)_{ij} = y_{i,j-i+1} - y_{i-1,j-i+2},
\]
where the second term is understood to be $0$ if $i = 1$.  
Conversely, given $b = (b_{ij}) \in B(\bw)$, let $\bar\nu(b)$ be the triangular array in $\bP(\bw)$ given by
\[
\bar\nu(b)_{ij} = \sum_{1 \le h \le i} b_{h,i+j-1}.
\]
Straightforward computations show that $\nu$ and $\bar\nu$ are both well-defined, and that they are inverse to each other.
\end{proof}

For $\bw = (w_1,\ldots,w_n)$, let $\bw^* = (w_n,\ldots,w_1)$ be the reverse of $\bw$.

\begin{corollary}\label{cor:samecard}
The sets $\bP(\bw)$ and $\bP(\bw^*)$ have the same cardinality.
\end{corollary}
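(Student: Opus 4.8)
The plan is to route the comparison through the sets $B(\bw)$. By Lemma~\ref{lem:pb-bij} there are bijections $\bP(\bw) \stackrel{1-1}{\longrightarrow} B(\bw)$ and $\bP(\bw^*) \stackrel{1-1}{\longrightarrow} B(\bw^*)$, so it suffices to produce a bijection $B(\bw) \stackrel{1-1}{\longrightarrow} B(\bw^*)$. Both of these sets are finite, since each is cut out inside $\Z_{\ge 0}^{\{(i,j)\,:\,i\le j\}}$ by the equations $\sum b_{ij}\gamma_{ij} = \bw$ (respectively $= \bw^*$), and the $\gamma_{ij}$ all have strictly positive coordinates.

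For that bijection I would exploit the evident symmetry of the positive roots of type $A_n$. Replacing $\bw$ by $\bw^*$ is the effect on dimension vectors of the linear involution $r \colon \Z^n \to \Z^n$ determined by $r(\be_k) = \be_{n+1-k}$, and $r$ permutes the positive roots according to $r(\gamma_{ij}) = \gamma_{n+1-j,\,n+1-i}$. Since $(i,j) \mapsto (n+1-j,\,n+1-i)$ is an involution of the index set $\{(i,j) : 1 \le i \le j \le n\}$, the formula $\sigma(b)_{ij} = b_{n+1-j,\,n+1-i}$ carries a family indexed by that set to another such family. Applying $r$ to the identity $\sum_{i\le j} b_{ij}\gamma_{ij} = \bw$ and reindexing the sum then gives $\sum_{i \le j} \sigma(b)_{ij}\gamma_{ij} = r(\bw) = \bw^*$, so $\sigma$ maps $B(\bw)$ into $B(\bw^*)$; the same formula with $\bw$ and $\bw^*$ interchanged is an inverse, so $\sigma$ is a bijection. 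Composing $\bar\nu$ for $\bw$, then $\sigma$, then $\nu$ for $\bw^*$ produces an explicit bijection $\bP(\bw) \to \bP(\bw^*)$.

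I do not expect a genuine obstacle here; the whole thing is a short bookkeeping computation with indices. The one point worth flagging is that the tempting shortcut of reflecting a triangular array from top to bottom does not work directly: such a reflection swaps the roles of chutes and ladders, so it converts the defining conditions of $\bP(\bw)$ (prescribed chute-sums, weakly decreasing along ladders) into the conditions ``prescribed ladder-sums, weakly decreasing along chutes,'' which describe an a priori different set. Passing through $B(\bw)$, where the relevant symmetry is transparent, is therefore the efficient route, and I would not attempt to write down a bijection on triangular arrays by hand.
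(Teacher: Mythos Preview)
Your proposal is correct and takes essentially the same approach as the paper: both route through Lemma~\ref{lem:pb-bij} and the bijection $B(\bw)\to B(\bw^*)$ given by $(b_{ij})\mapsto(b_{n+1-j,\,n+1-i})$. You simply spell out more of the verification (and add a nice remark on why naively reflecting the triangular array fails), but the argument is identical.
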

\begin{proof}
This follows from the fact that there is a bijection $B(\bw) \to B(\bw^*)$ given by $(b_{ij}) \mapsto (b_{n-j+1,n-i+1})$. 
\end{proof}

\subsection{Orbits}
\label{ss:orbits}

Combining~\eqref{eqn:gabriel} and Lemma~\ref{lem:pb-bij}, we obtain a bijection between $\bP(\bw)$ and the set of $G(\bw)$-orbits in $E(\bw)$.  For $Y \in \bP(\bw)$, let
\[
\cO_Y \subset E(\bw)
\]
be the corresponding $G(\bw)$-orbit.  Let us write down a concrete representative of this orbit.  

\begin{lemma}
Let $x \in E(\bw)$, and let $Y = (y_{ij}) \in \bP(\bw)$.  The following are equivalent:
\begin{enumerate}
\item $x \in \cO_Y$.\label{it:orbit}
\item Each $\C^{w_i}$ admits a basis\label{it:jordan}
\begin{equation}\label{eqn:model-basis}
\{ u_{ij}^{(k)} \mid \text{$1 \le j \le n-i+1$, $1 \le k \le y_{ij}$} \}
\end{equation}
such that $x_i: \C^{w_i} \to \C^{w_{i+1}}$ is given by
\begin{equation}\label{eqn:model-repn}
x_i(u_{ij}^{(k)}) = 
\begin{cases}
u_{i+1,j-1}^{(k)} & \text{if $j > 1$,} \\
0 & \text{if $j = 1$.}
\end{cases}
\end{equation}
\end{enumerate}
\end{lemma}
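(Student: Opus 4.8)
The plan is to produce a single explicit point of $\cO_Y$ that satisfies condition~\eqref{it:jordan}, and then to observe that~\eqref{it:jordan} holds for an arbitrary $x$ exactly when $M(x)$ is isomorphic to the representation attached to that explicit point. Fix $Y = (y_{ij}) \in \bP(\bw)$. Using the chute-sum condition $\sum_{j} y_{ij} = w_i$, relabel the standard basis of each $\C^{w_i}$ as $\{u_{ij}^{(k)} \mid 1 \le j \le n-i+1,\ 1 \le k \le y_{ij}\}$, and define $x^Y = (x^Y_1,\dots,x^Y_{n-1}) \in E(\bw)$ by formula~\eqref{eqn:model-repn}. The only point needing care here is that this is well defined, i.e.\ that $u_{i+1,j-1}^{(k)}$ really is a basis vector of $\C^{w_{i+1}}$ whenever $u_{ij}^{(k)}$ is one with $j>1$: the index bound $1 \le j-1 \le n-i$ is clear, and the bound $k \le y_{i+1,j-1}$ holds because $y_{i+1,j-1} \ge y_{ij}$ is precisely the defining ladder inequality of $\bP(\bw)$ with $(i,j)$ replaced by $(i+1,j-1)$.

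Next I would decompose $M(x^Y)$ into indecomposables using a ``string'' picture. Call $u_{ij}^{(k)}$ a \emph{source} if $i=1$ or $k > y_{i-1,j+1}$; one checks that this holds exactly when $u_{ij}^{(k)}$ is not in the image of $x^Y_{i-1}$, and that a non-source vector is the image under $x^Y_{i-1}$ of a unique basis vector at the previous vertex. Iterating~\eqref{eqn:model-repn} forward, a source $u_{ij}^{(k)}$ generates the string
\[
u_{ij}^{(k)} \longmapsto u_{i+1,j-1}^{(k)} \longmapsto \cdots \longmapsto u_{i+j-1,1}^{(k)} \longmapsto 0,
\]
whose span is a subrepresentation of $M(x^Y)$ isomorphic to $R_{i,\,i+j-1}$; an induction on the vertex shows that every basis vector belongs to exactly one such string, so these subrepresentations realize $M(x^Y)$ as a direct sum. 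Counting sources at vertex $i$ in column position $j$ gives $y_{ij} - y_{i-1,j+1}$ copies of $R_{i,\,i+j-1}$ (with the convention $y_{0,\cdot} = 0$); reindexing via $m = i+j-1$, the summand $R_{im}$ occurs with multiplicity $y_{i,m-i+1} - y_{i-1,m-i+2} = \nu(Y)_{im}$. By Gabriel's theorem and the bijection~\eqref{eqn:gabriel}, the $G(\bw)$-orbit of $x^Y$ is the one labelled by $\nu(Y)$, that is, $x^Y \in \cO_Y$.

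To finish, I would close the loop between~\eqref{it:orbit} and~\eqref{it:jordan}. If~\eqref{it:jordan} holds for $x$ with chosen bases $\{u_{ij}^{(k)}\}$, then sending the standard basis of $M(x^Y)$ to these vectors defines an isomorphism $M(x^Y) \xrightarrow{\sim} M(x)$; conversely any isomorphism $M(x^Y) \xrightarrow{\sim} M(x)$ transports the standard basis of $M(x^Y)$ to a basis of $M(x)$ exhibiting~\eqref{it:jordan}. Hence~\eqref{it:jordan} holds for $x$ if and only if $M(x) \cong M(x^Y)$, which---since two points of $E(\bw)$ share a $G(\bw)$-orbit if and only if the associated quiver representations are isomorphic---holds if and only if $x$ and $x^Y$ lie in the same orbit, i.e.\ if and only if $x \in \cO_Y$.

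The main obstacle is the bookkeeping in the middle paragraph: one must verify that the string issuing from a source never leaves the relabelled standard basis (which is exactly where the ladder inequalities get used) and that the source count matches the formula for $\nu$ in Lemma~\ref{lem:pb-bij} term by term. The rest is a matter of unwinding the bijections $\bP(\bw) \leftrightarrow B(\bw) \leftrightarrow \{G(\bw)\text{-orbits}\}$.
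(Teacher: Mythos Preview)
Your proof is correct and follows essentially the same route as the paper's: construct an explicit point $x^Y$ (the paper calls it $z$) satisfying~\eqref{eqn:model-repn}, decompose $M(x^Y)$ into indecomposables by grouping basis vectors into strings and counting multiplicities against $\nu(Y)$, and then reduce the general equivalence to the fact that orbit membership coincides with isomorphism of the underlying representations. Your ``source'' bookkeeping is a relabelling of the paper's choice of $k$ in the range $y_{i-1,j-i+2}+1 \le k \le y_{i,j-i+1}$, and the resulting strings are exactly the paper's summands $N_k$.
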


Note that the set in~\eqref{eqn:model-basis} does indeed consist of exactly $w_i$ elements.  We will call a basis in which~\eqref{eqn:model-repn} holds a \emph{Jordan basis of type $Y$}, by analogy with Jordan normal form for matrices.

\begin{proof}
We will first show that part~\eqref{it:jordan} implies part~\eqref{it:orbit}.  Assume that~\eqref{eqn:model-repn} holds.  Let $(b_{ij}) = \nu(Y) \in B(\bw)$.  To show that $x \in \cO_Y$, we must show that the representation $M(x)$ contains exactly $b_{ij}$ copies of $R_{ij}$ as direct summands, for all $i$ and $j$ such that $1 \le i \le j \le n$.  Fix such an $i$ and $j$. Also fix an integer $k$ such that
\[
y_{i-1,j-i+2}+1 \le k \le y_{i,j-i+1}.
\]
(If $i = 1$, then $y_{i-1,j-i+2}$ should be understood to be $0$.) Let $N^h_k \subset \C^{w_h}$ be the subspace given by
\[
N^h_k = 
\begin{cases}
0 & \text{if $h < i$ or $h > j$,} \\
\spn\{u_{h,j-h+1}^{(k)}\} & \text{if $i \le h \le j$.}
\end{cases}
\]
It can be checked using~\eqref{eqn:model-repn} that $N_k = \bigoplus_h N^h_k$ is a subrepresentation of $x$, and that it is isomorphic to $R_{ij}$.  On the other hand, the span of the basis elements from~\eqref{eqn:model-basis} that are not included in $N_k$ is also a subrepresentation, so $N_k$ is a direct summand.  The number of choices for $k$ is $y_{i,j-i+1} - y_{i-1,j-i+2} = b_{ij}$, so we have shown that $x$ contains at least $b_{ij}$ copies of $R_{ij}$ as direct summands.  The total dimension vector of the summands we have produced is already equal to $\bw$, so in fact $x$ contains exactly $b_{ij}$ copies of $R_{ij}$.

Suppose now that $x \in \cO_Y$. Define a new representation $z \in E(\bw)$ by choosing some basis as in~\eqref{eqn:model-basis}, and then defining the linear maps $z_i: \C^{w_i} \to \C^{w_{i+1}}$ using the formula~\eqref{eqn:model-repn}.  By the implication we have already proved, we have $z \in \cO_Y$.  Since $M(x)$ and $M(z)$ are isomorphic, $M(x)$ also admits a Jordan basis of type $Y$.
\end{proof}

In a Jordan basis, we have $\ker x_i = \spn \{ u_{i1}^{(k)} \mid 1 \le k \le y_{i1} \}$.  More generally, we have
\begin{equation}\label{eqn:kernel-example}
\begin{aligned}
\ker x_{i+j-1} \cdots x_{i+1}x_i &= \spn \{ u_{ih}^{(k)} \mid \text{$1 \le h \le j$,  $1 \le k \le y_{ih}$} \}, \\
\dim \ker x_{i+j-1} \cdots x_{i+1}x_i &= y_{i1} + y_{i2} + \cdots + y_{ij}.
\end{aligned}
\end{equation}

\begin{remark}
A number of basic notions involving quiver representations can be translated into the language of triangular arrays.  We list some examples below, using the following notation:
for $Y \in \bP(\bw)$, we let $M(Y)$ denote the quiver representation corresponding to some point $x \in \cO_Y$.
\begin{enumerate}
\item For $Y \in \bP(\bw)$ and $Z \in \bP(\bv)$, we have $M(Y+Z) \cong M(Y) \oplus M(Z)$.  (Here $Y+Z$ is the entrywise sum of $Y$ and $Z$.)
\item The module $M(Y)$ is an injective object in $\Rep(Q_n)$ if and only if $Y$ is constant along ladders, i.e., if $y_{ij} = y_{i-1,j+1}$ for all $i$ and $j$.
\item The module $M(Y)$ is a projective object in $\Rep(Q_n)$ if and only if $Y$ has nonzero entries only in the last ladder, i.e., if $y_{ij} = 0$ whenever $i+j < n+1$.
\end{enumerate}
\end{remark}

\section{The partial order on orbits}
\label{sec:partialorder}

Let $\le_\bg$ be the partial order on $\bP(\bw)$ induced by the closure order on $G(\bw)$-orbits; that is, for $Y, Y' \in \bP(\bw)$,
\begin{equation}\label{eqn:geom-order-defn}
Y \le_\bg Y' 
\qquad
\text{if $\cO_Y \subset \overline{\cO_{Y'}}$.}
\end{equation}
The goal of this section is to prove that the chutewise dominance order $\le_\bc$ (see \eqref{eqn:comb-order-defn}) and the geometric partial order $\le_\bg$ coincide.

Let $W$ be the symmetric group on $n+1$ letters, i.e., the Weyl group associated to the Dynkin diagram that is the underlying graph of our quiver $Q_n$.  Let $s_i$ (for $i = 1, 2, \ldots, n$) be the transposition that exchanges $i$ and $i+1$.  In other words, these are the simple reflections in $W$. Consider the following reduced expression for the longest element $w_0 \in W$:
\[
w_0 = (s_n)(s_{n-1}s_n)\cdots(s_2s_3 \cdots s_n)(s_1s_2 \cdots s_n).
\]
This reduced expression is ``adapted'' to our quiver in the sense of~\cite[\S4.7]{lusztig}. More precisely, in the notation of~\cite{lusztig}, the sequence
\[
\mathbf{i} = (n,n-1,n,\ldots,2,3,\ldots,n,1,2,\ldots,n) \in \mathscr{H}
\]
is adapted to our quiver.  This sequence determines an ordering on the set of positive roots as in~\cite[\S2.8]{lusztig}.  Denote the positive roots in this order by $\alpha^1, \alpha^2, \ldots, \alpha^{n(n+1)/2}$.  They are given by:
\[
\gamma_{nn}, \gamma_{n-1,n}, \gamma_{n-1,n-1}, \ldots, \gamma_{in}, \gamma_{i,n-1}, \ldots, \gamma_{ii}, \ldots, \gamma_{1n}, \gamma_{1,n-1}, \ldots, \gamma_{11}.
\]
(Recall that $\gamma_{ij} = \be_i + \be_{i+1} + \cdots + \be_j$.)  Note that for $b = (b_{ij}) \in B(\bw)$,  the ordering on the positive roots induces an ordering on the $b_{ij}$.  We write $b^t$ to denote the number $b_{ij}$ corresponding to the positive root $\alpha^t$.

Next, let $\varpi_1^\vee, \ldots, \varpi_n^\vee$ be the fundamental coweights, and let $\phi_{ij} = -\varpi^\vee_{i-1} + \varpi^\vee_j$. Following \cite{b:cbqfm, m:rtgea}, the sequence $\mathbf{i}$ determines a sequence of $n(n+1)/2$ ``chamber coweights'' $\lambda^1, \lambda^2, \ldots, \lambda^{n(n+1)/2}$.  They are given by:
\[
\phi_{nn}, \phi_{n-1,n}, \phi_{n-1,n-1}, \ldots, \phi_{in}, \phi_{i,n-1}, \ldots, \phi_{ii}, \ldots, \phi_{1n}, \phi_{1,n-1}, \ldots, \phi_{11}.
\]
We write $\langle -,- \rangle$ for the usual pairing between coweights and weights. We have the following description of $\le_\bg$.

\begin{theorem}[{\cite[Proposition~4.1 and Remark~4.2(i)]{b:cbqfm}}]
For $Y, Z \in \bP(\bw)$, we have $Y \le_\bg Z$ if and only if
\begin{equation}\label{eqn:comb-order-two-defn}
\sum_{s = 1}^t \langle \lambda^t, \alpha^s \rangle \nu(Y)^s \ge \sum_{s = 1}^t \langle \lambda^t, \alpha^s \rangle \nu(Z)^s
\qquad\text{for all $1\le t \le n(n+1)/2$.}
\end{equation}
\end{theorem}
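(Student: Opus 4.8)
Since this statement is quoted from~\cite{b:cbqfm}, we only indicate how one could verify it in the present setting. The plan is to reduce the geometric order $\le_\bg$ to the classical rank criterion for degenerations, and then to recognize the inequalities~\eqref{eqn:comb-order-two-defn} as a disguised form of the chutewise dominance inequalities~\eqref{eqn:comb-order-defn}. For the first step we invoke the Abeasis--Del Fra rank criterion for equioriented type~$A$ quivers~\cite{adf} (valid in all Dynkin types by Bongartz): for $x,y \in E(\bw)$ one has $\overline{G(\bw)\cdot x} \supseteq G(\bw)\cdot y$ if and only if $\rank(x_{q-1}\cdots x_p) \ge \rank(y_{q-1}\cdots y_p)$ for all $1 \le p \le q \le n$. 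Taking $x \in \cO_Z$ and $y \in \cO_Y$ and passing from ranks to kernels (the source dimensions being fixed by $\bw$), $Y \le_\bg Z$ becomes the assertion that $\dim\ker(y_{q-1}\cdots y_p) \ge \dim\ker(x_{q-1}\cdots x_p)$ for all such $p,q$; by~\eqref{eqn:kernel-example} (with $i = p$ and $j = q-p$) this is exactly $\sum_{k=1}^{q-p} y_{pk} \ge \sum_{k=1}^{q-p} z_{pk}$, i.e.\ $Y \le_\bc Z$ (the one remaining chutewise inequality, the full-chute sum, being the equality $w_i = w_i$). So it remains to prove that~\eqref{eqn:comb-order-two-defn} is equivalent to $Y \le_\bc Z$.

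The second step is a direct evaluation of the left-hand side of~\eqref{eqn:comb-order-two-defn}. Fix $t$ and let $\phi_{ij}$ be the corresponding chamber coweight, so that $\alpha^t = \gamma_{ij}$ and, in the ordering attached to $\mathbf{i}$, the indices $s \le t$ are exactly the $\gamma_{ab}$ with $a > i$, or with $a = i$ and $b \ge j$. Using that $\langle \varpi^\vee_k, \gamma_{ab} \rangle$ is $1$ when $a \le k \le b$ and $0$ otherwise, one checks that for these indices $\langle \phi_{ij}, \gamma_{ab} \rangle$ is $1$ when $a \le j \le b$ and $0$ otherwise (the $-\varpi^\vee_{i-1}$ contribution vanishes because $a \ge i$), and that the $s \le t$ with this pairing nonzero are precisely the $\gamma_{ab}$ with $i \le a \le j \le b \le n$. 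Hence
\[
\sum_{s=1}^{t} \langle \lambda^t, \alpha^s \rangle\, \nu(Y)^s \;=\; \sum_{i \le a \le j \le b \le n} \nu(Y)_{ab}.
\]
Substituting $\nu(Y)_{ab} = y_{a,b-a+1} - y_{a-1,b-a+2}$ from Lemma~\ref{lem:pb-bij}, the sum over $b$ turns each chute-tail into $w_a$ minus a partial chute-sum, and the sum over $a$ telescopes, leaving
\[
\sum_{s=1}^{t} \langle \lambda^t, \alpha^s \rangle\, \nu(Y)^s \;=\; (w_j - w_{i-1}) + \sum_{k=1}^{\,j-i+1} y_{i-1,k},
\]
with the conventions $w_0 = 0$ and $y_{0,k} = 0$.

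For the conclusion, observe that since $\udim Y = \udim Z = \bw$ the term $w_j - w_{i-1}$ is common to $Y$ and $Z$. So the inequality in~\eqref{eqn:comb-order-two-defn} indexed by $t \leftrightarrow \phi_{ij}$ is automatically an equality when $i = 1$ (it reads $w_j \ge w_j$), and for $i \ge 2$ it reads $\sum_{k=1}^{j-i+1} y_{i-1,k} \ge \sum_{k=1}^{j-i+1} z_{i-1,k}$. As $(i,j)$ runs over $2 \le i \le j \le n$, the pair $(i-1,\, j-i+1)$ runs over all $(i',j')$ with $1 \le i' \le n-1$ and $1 \le j' \le n-i'$; comparing with~\eqref{eqn:comb-order-defn}, whose remaining instances (the full-chute sums and the single entry of chute $n$) are equalities forced by $\bw$, we conclude that~\eqref{eqn:comb-order-two-defn} holds for every $t$ if and only if $Y \le_\bc Z$, which by the first step is $Y \le_\bg Z$.

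The routine computations above are not the real difficulty; the substantive input is the rank criterion for degenerations used in the first step, which is itself a nontrivial theorem. A proof genuinely in the spirit of~\cite{b:cbqfm} would instead identify $\sum_{s \le t} \langle \lambda^t, \alpha^s \rangle \nu(Y)^s$ with the coordinate indexed by $\lambda^t$ of the pseudo-Weyl (GGMS) polytope attached to the module $M(Y)$, and then invoke the facts that these polytope coordinates are monotone under degeneration and separate $G(\bw)$-orbits; recovering that circle of ideas without borrowing the rank criterion is the main obstacle.
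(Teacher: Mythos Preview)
Your argument is correct. The paper itself does not prove this statement at all: it is quoted from~\cite{b:cbqfm} and used as input for the subsequent Lemma and for Theorem~\ref{thm:po}. Your proposal reverses that logical flow. Where the paper takes Baumann's characterization of $\le_\bg$ as a black box and then manipulates~\eqref{eqn:comb-order-two-defn} into~\eqref{eqn:neworder} and finally into~\eqref{eqn:comb-order-defn}, you instead invoke the Abeasis--Del~Fra rank criterion~\cite{adf} together with~\eqref{eqn:kernel-example} to obtain $\le_\bg = \le_\bc$ directly, and then deduce Baumann's inequalities from $\le_\bc$ by your telescoping computation. Your Step~2 computation recovers exactly the double sum in~\eqref{eqn:neworder} (with $\ell=i$, $k=j$) and then simplifies it one step further than the paper does, to $(w_j-w_{i-1})+\sum_{k=1}^{j-i+1} y_{i-1,k}$; this is the same content as the paper's Lemma plus the proof of Theorem~\ref{thm:po}, just run in the opposite direction and packaged more cleanly via telescoping rather than via the auxiliary identities~\eqref{eqn:ordexp1}--\eqref{eqn:ordexp2}. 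The trade-off you already identify is exactly right: both routes import a nontrivial geometric theorem (Baumann's polytope description of degenerations in the paper, the rank criterion in your version), and the rest is bookkeeping with $\nu$.
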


For $Y,Z \in \bP(\bw)$, let us write $\nu(Y) = (b_{ij})_{1\le i\le j\le n}$ and $\nu(Z) = (c_{ij})_{1 \le i \le j \le n}$.  Consider the following condition:
\begin{equation}\label{eqn:neworder}
\sum_{i=\ell}^k \sum_{j=k}^n b_{ij} \geq \sum_{i=\ell}^k \sum_{j=k}^n c_{ij}
\qquad\text{for all $1 \le \ell \le k \le n$.}
\end{equation}

\begin{lemma}
Let $Y, Z \in \bP(\bw)$.  Then \eqref{eqn:comb-order-two-defn} and \eqref{eqn:neworder} are equivalent conditions.
\end{lemma}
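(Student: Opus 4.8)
The plan is to unwind the pairings appearing in \eqref{eqn:comb-order-two-defn} and to check that, once this is done, its inequalities are literally the same as those of \eqref{eqn:neworder}, one for each index $t$.

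First I would record the relevant pairing values. Since the $\be_k$ are the simple roots and the $\varpi^\vee_a$ the fundamental coweights, $\langle \varpi^\vee_a, \be_k \rangle = \delta_{ak}$, so
\[
\langle \varpi^\vee_a, \gamma_{ij} \rangle = \sum_{k=i}^{j} \langle \varpi^\vee_a, \be_k \rangle = \bigl(\text{$1$ if $i \le a \le j$, and $0$ otherwise}\bigr),
\]
with the convention $\varpi^\vee_0 = 0$. Hence $\langle \phi_{pq}, \gamma_{ij} \rangle = \langle -\varpi^\vee_{p-1} + \varpi^\vee_q, \gamma_{ij} \rangle$ equals the indicator of ``$i \le q \le j$'' minus the indicator of ``$i \le p-1 \le j$''.

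Now fix $t$, and let $(p,q)$ be the pair with $1 \le p \le q \le n$ for which $\lambda^t = \phi_{pq}$ and $\alpha^t = \gamma_{pq}$. From the given ordering of the positive roots --- grouped by first index from $n$ down to $1$, and within each group by second index from $n$ down to the first index --- an index $s$ satisfies $s \le t$ exactly when $\alpha^s = \gamma_{ij}$ with either $i > p$, or $i = p$ and $j \ge q$. For all such $(i,j)$ one has $i > p-1$, so the ``$\varpi^\vee_{p-1}$'' term above drops out, and $\langle \phi_{pq}, \gamma_{ij}\rangle$ is just the indicator of ``$i \le q \le j$''. A one-line check then shows that, among the roots with $s \le t$, this indicator is $1$ precisely for those $\gamma_{ij}$ with $p \le i \le q \le j \le n$ (the case $i = p$ always contributing $1$ since there $q \le j$). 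Using that $\nu(Y)^s$ is the entry $b_{ij}$ attached to $\alpha^s = \gamma_{ij}$, this gives
\[
\sum_{s=1}^{t} \langle \lambda^t, \alpha^s \rangle\, \nu(Y)^s \;=\; \sum_{i=p}^{q} \sum_{j=q}^{n} b_{ij},
\]
and identically with $c_{ij}$ in place of $b_{ij}$.

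Consequently the inequality in \eqref{eqn:comb-order-two-defn} for the index $t$ is exactly the inequality in \eqref{eqn:neworder} with $(\ell, k) = (p,q)$; as $t$ runs over $1, \dots, n(n+1)/2$ the pair $(p,q)$ runs over all $1 \le p \le q \le n$, so the two systems of inequalities coincide term by term and the lemma follows. The only step needing care is the identification of which roots $\alpha^s$ precede $\alpha^t$ together with the boundary bookkeeping at $i = p$; this is a routine finite check, not a real obstacle, and the substance of the statement is simply that the adapted chamber-coweight description of $\le_\bg$ repackages cleanly as the double sums in \eqref{eqn:neworder}.
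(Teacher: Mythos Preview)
Your argument is correct and follows the same route as the paper's proof: you identify which pairs $(i,j)$ index roots $\gamma_{ij}$ occurring with $s\le t$, observe that for those the $\varpi^\vee_{p-1}$ term vanishes, and then read off that the nonzero contributions are exactly those with $p\le i\le q\le j\le n$, matching~\eqref{eqn:neworder} with $(\ell,k)=(p,q)$. The paper's proof records precisely the same computation, only more tersely; your version simply makes the pairing values and the ordering of the roots explicit.
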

\begin{proof}
Notice that the pairing $\langle \phi_{lk}, \gamma_{ij} \rangle$ appears in the sums from \eqref{eqn:comb-order-two-defn} if and only if $i > \ell$ or $i = \ell$ and $j \ge k$.  Under these conditions,
\[
\langle \phi_{\ell k}, \gamma_{ij} \rangle = \left\{\begin{array}{ll}1 & \text{if } n \ge j \ge k \text{ and } k \ge i \ge \ell \\ 0 & \text{otherwise.} \end{array}\right.
\] 
The claim follows.
\end{proof}

\begin{theorem}\label{thm:po}
The chutewise dominance order $\le_\bc$ on $\bP(\bw)$ coincides with the geometric partial order $\le_{\bg}$.
\end{theorem}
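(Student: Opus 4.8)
The plan is to reduce both orders to the condition~\eqref{eqn:neworder}. By the theorem of~\cite{b:cbqfm} quoted above, together with the lemma immediately preceding this one, we already know that, for $Y, Z \in \bP(\bw)$, the relation $Y \le_\bg Z$ holds if and only if~\eqref{eqn:neworder} holds, where $\nu(Y) = (b_{ij})$ and $\nu(Z) = (c_{ij})$. So it suffices to prove that the defining condition~\eqref{eqn:comb-order-defn} of $\le_\bc$ and the condition~\eqref{eqn:neworder} are equivalent conditions on the pair $(Y,Z)$ --- a purely combinatorial statement, with no further geometric input.

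To compare the two, I would pass back and forth through the bijection $\nu$ of Lemma~\ref{lem:pb-bij}. Write $Y = (y_{ij})$, $Z = (z_{ij})$, $b = \nu(Y)$, $c = \nu(Z)$; since $\bar\nu = \nu^{-1}$, one has $y_{ij} = \sum_{h=1}^i b_{h,\,i+j-1}$, and hence, for all $i$ and all $j$ with $1 \le j \le n-i+1$,
\[
\sum_{k=1}^j y_{ik} \;=\; \sum_{h=1}^i \sum_{m=i}^{i+j-1} b_{hm}.
\]
The right-hand side is the total multiplicity, in $\nu(Y)$, of the indecomposables $R_{hm}$ whose supporting interval $[h,m]$ contains the vertex $i$ and has right endpoint at most $i+j-1$. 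The total multiplicity of all $R_{hm}$ with $h \le i \le m$ is the $i$th entry $w_i$ of $\bw = \udim(Y) = \udim(Z)$; subtracting, the inequality $\sum_{k=1}^j y_{ik} \ge \sum_{k=1}^j z_{ik}$ of~\eqref{eqn:comb-order-defn} becomes equivalent to the complementary inequality $\sum_{h=1}^i \sum_{m=i+j}^n b_{hm} \le \sum_{h=1}^i \sum_{m=i+j}^n c_{hm}$. (When $i+j = n+1$ both sides are empty, so the original inequality is just $w_i = w_i$.)

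The analogous maneuver applies to~\eqref{eqn:neworder}: the quantity $\sum_{i=1}^{k}\sum_{j=k}^n b_{ij}$ is the total multiplicity of the $R_{ij}$ with $i \le k \le j$, so it equals $w_k$, the same for $b$ and $c$. Splitting off the terms with $i \le \ell-1$, the $(\ell,k)$ instance of~\eqref{eqn:neworder} becomes equivalent to $\sum_{i=1}^{\ell-1}\sum_{j=k}^n b_{ij} \le \sum_{i=1}^{\ell-1}\sum_{j=k}^n c_{ij}$. Finally, the substitution $(\ell,k) = (i+1,\, i+j)$ is a bijection between the pairs $(i,j)$ with $1 \le i$, $1 \le j$, $i + j \le n$ and the pairs $(\ell,k)$ with $2 \le \ell \le k \le n$, and under it the two reformulated families of inequalities are literally identical. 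The instances omitted on each side --- those with $i+j = n+1$ for~\eqref{eqn:comb-order-defn}, and those with $\ell = 1$ for~\eqref{eqn:neworder} --- are the trivial equalities noted above. Hence~\eqref{eqn:comb-order-defn} holds if and only if~\eqref{eqn:neworder} holds, which is exactly the assertion of the theorem.

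The computations here are routine; the only point that needs care is the bookkeeping of the index ranges --- in particular, isolating the instances of each condition that are automatically equalities because $\udim(Y) = \udim(Z)$, and checking that the re-indexing $(i,j) \leftrightarrow (\ell,k)$ is a genuine bijection between the remaining non-trivial instances.
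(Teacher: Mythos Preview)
Your proposal is correct and takes essentially the same approach as the paper: both arguments complement the partial-sum inequalities using the identity $\sum_{h \le i \le m} b_{hm} = w_i$ (equivalently, the $\ell = 1$ case of~\eqref{eqn:neworder}) and then re-index to match the two families of conditions. Your organization is, if anything, a bit tidier --- the paper routes through an intermediate condition~\eqref{eqn:midorder} via two complementations rather than one on each side --- but the substance is identical.
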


\begin{proof}
Given $Y = (y_{ij})$ and $Z \in (z_{ij})$ in $\bP(\bw)$, write $\nu(Y) = (b_{ij})_{1\le i\le j\le n}$ and $\nu(Z) = (c_{ij})_{1 \le i \le j \le n}$.  Recall from the proof of Lemma~\ref{lem:pb-bij} that
\[
y_{ij} = \sum_{h=1}^i b_{h,i+j-1}
\qquad\text{and}\qquad
z_{ij}=\sum_{h=1}^i c_{h,i+j-1}.
\]

We first observe that for any $Y$ and $Z$ (regardless of how they compare under $\le_\bg$), the $\ell = 1$ case of~\eqref{eqn:neworder} is actually an equality.  Indeed, the two sides simplify to $\sum_{j=k}^n y_{k,j-k+1}$ and $\sum_{j=k}^n z_{k,j-k+1}$, respectively, and both are equal to $w_k$ by the definition of $\bP(\bw)$.

Suppose $1 \le m \le k \le n$.  Here are two (somewhat expanded) instances of the $\ell = 1$ case of~\eqref{eqn:neworder}:
\begin{align}
\sum_{i=1}^{m-1} \sum_{j=k}^n b_{ij} + \sum_{i=m}^{k} \sum_{j=k}^n b_{ij}\label{eqn:ordexp1}
&=
\sum_{i=1}^{m-1} \sum_{j=k}^n c_{ij} + \sum_{i=m}^{k} \sum_{j=k}^n c_{ij}, \\
\sum_{i=1}^{m} \sum_{j=m}^{k} b_{ij} + \sum_{i=1}^{m} \sum_{j=k+1}^n b_{ij}\label{eqn:ordexp2}
&=
\sum_{i=1}^{m} \sum_{j=m}^{k} c_{ij} + \sum_{i=1}^{m} \sum_{j=k+1}^n c_{ij}.
\end{align}
Combining~\eqref{eqn:ordexp1} and~\eqref{eqn:neworder}, we see that $Y \le_\bg Z$ if and only if
\[
\sum_{i=1}^{m-1} \sum_{j=k}^n b_{ij} \leq \sum_{i=1}^{m-1} \sum_{j=k}^n c_{ij}
\qquad\text{for all $1 \le m \le k \le n$,}
\]
or, equivalently,
\begin{equation}\label{eqn:midorder}
\sum_{i=1}^{m} \sum_{j=k+1}^n b_{ij} \leq \sum_{i=1}^{m} \sum_{j=k+1}^n c_{ij}
\qquad\text{for all $1 \le m \le k \le n$.}
\end{equation}
Next,~\eqref{eqn:ordexp2} implies that~\eqref{eqn:midorder} holds if and only if
\[
\sum_{i=1}^{m} \sum_{j=m}^{k} b_{ij} \ge \sum_{i=1}^{m} \sum_{j=m}^{k} c_{ij}
\quad\text{or}\quad
\sum_{j=m}^{k} y_{m,j-m+1} \ge \sum_{j=m-1}^{k-1} z_{m,j-m+1}
\]
for all $1 \le m \le k \le n$.  This is equivalent to~\eqref{eqn:comb-order-defn}, so we conclude that $Y \le_\bg Z$ if and only if $Y \le_\bc Z$.
\end{proof}

\section{Dimensions of orbits}
\label{sec:dimension}

There is an explicit formula for the dimension of any orbit in $E(\bw)$ going back to~\cite[\S 6]{lusztig}, in terms of $B(\bw)$.  (Like the description of $\le_\bg$ given in Section~\ref{sec:partialorder}, the formula requires enumerating the positive roots based on the choice of an adapted reduced expression for $w_0$.)  In this section, we obtain a new dimension formula in terms of $\bP(\bw)$.  Our formula can probably be deduced combinatorially from Lusztig's formula~\cite{lusztig}, but we give a self-contained proof.

\begin{definition}
Let $\bw = (w_1, \ldots, w_n) \in \Z_{\ge 0}^n$, and let $Y \in \bP(\bw)$.  A \emph{kernel flag of type $Y$} is a collection of vector spaces $(V_{ij})_{1 \le i \le n,1 \le j \le n-i+1}$ such that
\[
0 \subset V_{i1} \subset V_{i2} \subset \cdots \subset V_{i,n-i+1} = \C^{w_i}
\qquad\text{and}\qquad
\dim V_{ij} = y_{i1} + y_{i2} + \cdots + y_{ij}.
\]
A quiver representation $x \in E(\bw)$ is said to \emph{preserve the kernel flag $(V_{ij})$} if
\[
x_i(V_{ij}) \subset 
\begin{cases}
0 & \text{if $j = 1$,} \\
V_{i+1,j-1} & \text{if $j > 1$.}
\end{cases}
\]
\end{definition}

This definition implies that if $x$ preserves $(V_{ij})$, then
\begin{equation}\label{eqn:kernel-flag}
V_{ij} \subset \ker x_{i+j-1} \cdots x_{i+1}x_i.
\end{equation}
This observation is the reason for the name ``kernel flag.''  The space of all kernel flags of type $Y$ is denoted by $\Fl_Y$.  Note that $G(\bw)$ acts transitively on $\Fl_Y$.  For any $V \in \Fl_Y$, let $G(\bw)^V$ be its stabilizer in $G(\bw)$.  We then have an isomorphism
\[
\Fl_Y \cong G(\bw)/G(\bw)^V.
\]

Next, for any $V \in \Fl_Y$, let
\[
E(\bw)^V = \{ x \in E(\bw) \mid \text{$x$ preserves the kernel flag $V$} \}.
\]
Then let $\tE_Y$ be the space of pairs
\[
\tE_Y = \{ (V, x) \in \Fl_Y \times E(\bw) \mid x \in E(\bw)^V \}.
\]
This space is a vector bundle over $\Fl_Y$, with fibers isomorphic to $E(\bw)^V$ for any $V \in \Fl_Y$.  In particular, $\tE_Y$ is a smooth, irreducible variety.  We denote by
\[
\pi_Y: \tE_Y \to E(\bw)
\]
the projection map onto the second factor.  This map is proper.  Finally, for another description of $\tE_Y$, choose a point $V \in \Fl_Y$. Then there is an isomorphism
\[
G(\bw) \times^{G(\bw)^V} E(\bw)^V \overset{\sim}{\to} \tE_Y
\]
given by $(g,x) \mapsto (gV,g \cdot x)$.  

\begin{lemma}
Let $Y \in \bP(\bw)$, and let $V \in \Fl_Y$.  Then we have
\[
\dim \Fl_Y = \hspace{-1em}\sum_{\substack{1 \le i \le n-1\\ 1 \le j < k \le n-i+1}}\hspace{-1em} y_{ij}y_{ik}
\qquad\text{and}\qquad
\dim E(\bw)^V = \hspace{-1em}\sum_{\substack{1 \le i \le n-1\\ 1 \le j < k \le n-i+1}}\hspace{-1em} y_{i+1,j}y_{ik}.
\]
\end{lemma}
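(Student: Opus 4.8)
The plan is to compute the two dimensions separately by exploiting the homogeneous-space and vector-bundle structures already set up. For $\dim\Fl_Y$, I would first reduce to a product of partial flag varieties. Since a kernel flag of type $Y$ is, by definition, a choice for each $i$ of a complete flag of the prescribed dimensions $y_{i1}, y_{i1}+y_{i2}, \ldots$ inside $\C^{w_i}$, with \emph{no compatibility required between different values of $i$}, we have
\[
\Fl_Y \cong \prod_{i=1}^{n} \Fl(y_{i1},\, y_{i1}+y_{i2},\, \ldots,\, w_i;\, \C^{w_i}),
\]
a product of partial flag varieties. The dimension of the partial flag variety of type $(d_1 < d_2 < \cdots < d_r = w)$ in $\C^w$ equals $\sum_{j<k}(d_j - d_{j-1})(d_k - d_{k-1})$ (with $d_0 = 0$), i.e.\ the sum of products of the gap sizes. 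In our case the $j$th gap in the $i$th factor has size $y_{ij}$, so the $i$th factor contributes $\sum_{1 \le j < k \le n-i+1} y_{ij}y_{ik}$; the $i=n$ factor is a point and contributes nothing, matching the stated range $1 \le i \le n-1$. Summing over $i$ gives the first formula.

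For $\dim E(\bw)^V$, I would fix one convenient point $V \in \Fl_Y$ — for instance the flag spanned by coordinate subspaces, or better, the kernel flag associated to a Jordan basis of type $Y$ as in \eqref{eqn:model-basis} — and directly count the dimension of the space of tuples $(x_1, \ldots, x_{n-1})$ with $x_i(V_{ij}) \subset V_{i+1,j-1}$ for all $j$ (and $x_i(V_{i1}) = 0$). The condition on $x_i$ is a single linear condition: it says $x_i$ maps each step $V_{ij}$ of the source flag into the step $V_{i+1,j-1}$ of the target flag. Choosing bases adapted to both flags, write $x_i$ as a block matrix with row-blocks indexed by the target gaps (sizes $y_{i+1,1}, y_{i+1,2}, \ldots$) and column-blocks indexed by the source gaps (sizes $y_{i1}, y_{i2}, \ldots$). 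The constraint "$V_{ij}$ goes into $V_{i+1,j-1}$" forces the block in source-column $j$ and target-row $k$ to vanish whenever the target-row index exceeds $j-1$, i.e.\ the surviving blocks are exactly those with target-row index $\le j-1$; reindexing the target rows, the free blocks have dimensions $y_{i+1,a} \cdot y_{ij}$ with $a < j$. Hence $\dim E(\bw)^V$ restricted to the $x_i$ coordinate is $\sum_{1 \le a < j \le n-i+1} y_{i+1,a}\, y_{ij}$, and summing over $i$ from $1$ to $n-1$ gives the second formula (after renaming $a \rightsquigarrow j$, $j \rightsquigarrow k$).

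I expect the main obstacle to be purely bookkeeping: getting the block structure and the index ranges exactly right, in particular checking that the two displayed conditions (the $j=1$ case forcing $x_i(V_{i1})=0$, and the $j>1$ case) combine into precisely the triangular vanishing pattern claimed, and confirming that $E(\bw)^V$ has constant dimension as $V$ ranges over $\Fl_Y$ (which follows from $G(\bw)$ acting transitively on $\Fl_Y$, already noted in the text, since then all the $E(\bw)^V$ are $G(\bw)$-conjugate). A small point worth stating carefully is that the flag steps need not be \emph{strict} inclusions when some $y_{ij}=0$; but the block-counting argument is insensitive to this, as a zero-size gap simply contributes empty blocks. No delicate geometry is needed beyond the flag-variety dimension count and the elementary linear algebra of block matrices respecting a pair of filtrations.
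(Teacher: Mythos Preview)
Your proposal is correct and matches the paper's argument essentially verbatim. For $\dim\Fl_Y$ the paper computes $\dim G(\bw)-\dim G(\bw)^V$ by writing out the parabolic stabilizer, which is just the standard derivation of the partial-flag dimension formula you quote; for $\dim E(\bw)^V$ the paper carries out the same block decomposition, phrased as $x_i\in\prod_{k\ge 2}\Hom(V_{ik}/V_{i,k-1},V_{i+1,k-1})$.
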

\begin{proof}
Let us first compute $\dim \Fl_Y$. We begin by recalling that
\[
\dim \GL(w_i) = w_i^2 = (y_{i1} + \cdots + y_{i,n-i+1})^2 = \sum_{1 \le j,k \le n-i+1} y_{ij}y_{ik}.
\]
Consider the point $V = (V_{ij}) \in \Fl_Y$.  For each $i$, let $\GL(w_i)^{V_{i\bullet}}$ denote the stabilizer of the partial flag $0 \subset V_{i1} \subset V_{i2} \subset \cdots \subset V_{i,n-i+1} = \C^{w_i}$.  Then $G(\bw)^V$ is the product of the various $\GL(w_i)^{V_{i\bullet}}$.  Let us compute the dimension of the latter.  Choose a splitting of the flag, i.e., a vector space isomorphism
\[
\C^{w_i} = V_{i1} \oplus (V_{i2}/V_{i1}) \oplus \cdots \oplus (V_{i,n-i+1}/V_{i,n-i}).
\]
Note that $\dim V_{ij}/V_{i,j-1} = y_{ij}$.  We have
\[
\GL(w_i)^{V_{i\bullet}} \cong \prod_{j=1}^{n-i+1} \GL(V_{ij}/V_{i,j-1}) \ltimes \prod_{1 \le k < j \le n-i+1} \Hom(V_{ik}/V_{i,k-1}, V_{ij}/V_{i,j-1}),
\]
where we use the convention that if $j= 1$, then $V_{i,j-1} = 0$. Therefore,
\[
\dim \GL(w_i)^{V_{i\bullet}} = \sum_{j=1}^{n-i+1} y_{ij}^2 +
\sum_{1 \le k < j \le n-i+1} y_{ij}y_{ik}
= \sum_{1 \le k \le j \le n-i+1} y_{ij}y_{ik}.
\]
We are now ready to compute the dimension of $\Fl_Y$.  We have
\begin{multline*}
\dim \Fl_Y = \dim G(\bw) - \dim G(\bw)^V
= \sum_{i=1}^n (\dim \GL(\bw_i) - \dim \GL(\bw_i)^{V_{i\bullet}}) \\
= \sum_{i=1}^n \left(\sum_{1 \le j,k \le n-i+1} y_{ij}y_{ik} -  \sum_{1 \le k \le j \le n-i+1} y_{ij}y_{ik} \right)
= \sum_{i=1}^n \sum_{1 \le j < k \le n-i+1} y_{ij}y_{ik},
\end{multline*}
as desired.

Next, for $x = (x_i) \in E(\bw)^V$, we must have
\[
x_i \in \prod_{k=2}^{n-i+1} \Hom(V_{ik}/V_{i,k-1}, V_{i+1,k-1}).
\]
The dimension of the space on the right-hand side above is
\[
\sum_{k=2}^{n-i+1} y_{ik}(y_{i+1,1} + y_{i+1,2} + \cdots + y_{i+1,k-1})
= \sum_{1 \le j < k \le n-i+1} y_{i+1,j}y_{ik}.
\]
The dimension of $E(\bw)^V$ is the sum of these quantities over all $i$.
\end{proof}

\begin{lemma}
\label{lem:tE-open}
There is an open subset $U \subset \tE_Y$ such that $\pi_Y$ restricts to a bijection $U \to \cO_Y$.
\end{lemma}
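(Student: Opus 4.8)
\subsection*{Proof proposal}

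The plan is to take $U := \pi_Y^{-1}(\cO_Y)$. The whole argument rests on one observation: for $(V,x) \in \tE_Y$, the containment \eqref{eqn:kernel-flag} gives $V_{ij} \subseteq \ker(x_{i+j-1}\cdots x_{i+1}x_i)$, while $\dim V_{ij} = y_{i1} + \cdots + y_{ij}$ is fixed; hence on all of $\tE_Y$ one has
\[
\dim \ker(x_{i+j-1}\cdots x_{i+1}x_i) \ge y_{i1} + \cdots + y_{ij} \qquad \text{for all } i, j,
\]
and equality in the $(i,j)$ slot forces $V_{ij}$ to be precisely that kernel. So, on $\tE_Y$, being in the fibre over $\cO_Y$ will turn out to be the same as having all of these inequalities be equalities, and then $V$ is completely pinned down by $x$.

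First I would verify that $\pi_Y$ restricts to a bijection $U \to \cO_Y$. For injectivity: if $(V,x) \in \tE_Y$ with $x \in \cO_Y$, then \eqref{eqn:kernel-example} gives $\dim \ker(x_{i+j-1}\cdots x_{i+1}x_i) = y_{i1} + \cdots + y_{ij} = \dim V_{ij}$, so $V_{ij} = \ker(x_{i+j-1}\cdots x_{i+1}x_i)$ and $V$ is determined by $x$. For surjectivity: given $x \in \cO_Y$, set $V_{ij} := \ker(x_{i+j-1}\cdots x_{i+1}x_i)$; these subspaces have the correct dimensions by \eqref{eqn:kernel-example}, they are nested (a kernel lies inside the kernel of any left composite of the map, and at the top of each chute it is all of $\C^{w_i}$), and they are preserved by $x$ (for $j > 1$, if $x_{i+j-1}\cdots x_{i+1}x_i$ kills $v$ then $x_{i+j-1}\cdots x_{i+1}$ kills $x_i v$, so $x_i V_{ij} \subseteq V_{i+1,j-1}$; for $j = 1$, $x_i V_{i1} = x_i \ker x_i = 0$). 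Hence $(V,x) \in \tE_Y$ and $\pi_Y(V,x) = x$. Thus $\pi_Y|_U$ is a bijection onto $\cO_Y$.

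Next I would exhibit $U$ as an open set. I claim
\[
U = \bigl\{\, (V,x) \in \tE_Y \mid \dim \ker(x_{i+j-1}\cdots x_{i+1}x_i) = y_{i1} + \cdots + y_{ij} \text{ for all } i, j \,\bigr\}.
\]
The inclusion $\subseteq$ is \eqref{eqn:kernel-example}. For $\supseteq$: if $(V,x)$ lies in the right-hand set, pick the (unique) $Z = (z_{ij}) \in \bP(\bw)$ with $x \in \cO_Z$; then \eqref{eqn:kernel-example}, applied with $Z$ in place of $Y$, forces $z_{i1}+\cdots+z_{ij} = y_{i1}+\cdots+y_{ij}$ for all $i,j$, and since an array in $\bP(\bw)$ is recovered from the partial chute-sums appearing in \eqref{eqn:comb-order-defn}, we get $Z = Y$, i.e.\ $x \in \cO_Y$. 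Finally, on $\tE_Y$ the inequality ``$\ge$'' in the displayed condition is automatic, so the condition is equivalent to $\rank(x_{i+j-1}\cdots x_{i+1}x_i) \ge w_i - (y_{i1}+\cdots+y_{ij})$ for all $i,j$. The entries of each composite $x_{i+j-1}\cdots x_{i+1}x_i$ are polynomials in the coordinates on $E(\bw)$, hence regular functions on $\tE_Y$, and the locus where a matrix of regular functions has rank at least a fixed integer is open (it is the complement of the common vanishing locus of the relevant minors); a finite intersection of such loci is open, so $U$ is open in $\tE_Y$.

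I do not foresee a genuine obstacle: the lemma amounts to the fact that a kernel flag of type $Y$ preserved by a point of $\cO_Y$ has no freedom at all, being forced to consist of the honest kernels $\ker(x_{i+j-1}\cdots x_{i+1}x_i)$. The only points needing a little care are checking that these honest kernels really do form a point of $\Fl_Y$ preserved by $x$ (routine linear algebra) and the elementary remark that an element of $\bP(\bw)$ is determined by its partial chute-sums, which is what closes the $\supseteq$ argument above.
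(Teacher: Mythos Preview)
Your proof is correct, and it takes a genuinely different (and somewhat slicker) route than the paper.

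The paper defines $U$ first as the $G(\bw)$-saturation of the locus, inside a fixed fiber $E(\bw)^V$, where each induced map $V_{ij}/V_{i,j-1}\to V_{i+1,j-1}/V_{i+1,j-2}$ is injective; openness is then manifest, and the work lies in proving $\pi_Y|_U$ is a bijection onto $\cO_Y$. One direction (uniqueness of the fiber over $x\in\cO_Y$) is the same kernel/dimension argument you give; the other direction (points of $U$ land in $\cO_Y$) is done by explicitly building a Jordan basis of type $Y$ from the injectivity hypotheses. You instead take $U=\pi_Y^{-1}(\cO_Y)$ at the outset, so the bijection is immediate from the kernel containment~\eqref{eqn:kernel-flag} and the dimension formula~\eqref{eqn:kernel-example}, and the work shifts to openness. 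Your key observation is that, by~\eqref{eqn:kernel-example}, the partial chute sums of the orbit label equal the kernel dimensions of the composites, so the orbit $\cO_Y$ is cut out in $E(\bw)$ exactly by the conditions $\rank(x_{i+j-1}\cdots x_i)=w_i-(y_{i1}+\cdots+y_{ij})$; on $\tE_Y$ the ``$\le$'' half is automatic, leaving an open ``$\ge$'' rank condition. The upshot: your argument avoids the explicit Jordan-basis construction entirely and makes transparent that $\cO_Y$ is determined by the ranks of composites, while the paper's description of $U$ via injectivity of graded pieces is a bit more hands-on and yields a Jordan basis as a by-product.
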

\begin{proof}
Choose a point $V = (V_{ij}) \in \Fl_Y$.  Let $U_V \subset E(\bw)^V$ be the subset consisting of elements $x \in E(\bw)^V$ such that when $j > 1$, the map of quotient spaces
\[
V_{ij}/V_{i,j-1} \to V_{i+1,j-1}/V_{i+1,j-2}
\]
induced by $x_i$ is injective.  Note that $U_V$ is an open subset: with an appropriate choice of bases, the injectivity of these induced maps is equivalent to the nonvanishing of certain minors of the matrix for $x_i$.  The quotient map $q: G(\bw) \times E(\bw)^V \to G(\bw) \times^{G(\bw)^V} E(\bw)^V \cong \tE_Y$ is an open map, so the set $U = q(G(\bw) \times U_V)$ is open.

Let $x \in \cO_Y$.  We will show that $\pi_Y^{-1}(x)$ consists of a single point, and that that point lies in $U$.  Choose a Jordan basis $\{ u_{ij}^{(k)}\}$ for $M(x)$.  Comparing~\eqref{eqn:kernel-example} with~\eqref{eqn:kernel-flag}, we see that there is a unique kernel flag of type $Y$ preserved by $x$: namely,
\[
V_{ij} = \ker x_{i+j-1} \cdots x_{i+1}x_i.
\]
In other words, $\pi_Y^{-1}(x)$ consists of a single point.  The quotient space $V_{ij}/V_{i,j-1}$ can then be identified with the span of $\{ u_{ij}^{(k)} \mid 1 \le k \le y_{ij} \}$, so~\eqref{eqn:model-repn} shows us that the induced map $V_{ij}/V_{i,j-1} \to V_{i+1,j-1}/V_{i+1,j-2}$ is injective.  Thus, the point $((V_{ij}),x)$ belongs to $U$.

For the opposite direction, we start with a point $((V_{ij}),x) \in U$.  We will prove that $x \in \cO_Y$.  
We will construct a certain basis  $\{ u_{ij}^{(k)} \}_{1 \le j \le n-i+1, 1 \le k \le y_{ij}}$ for $\C^{w_i}$ with the property that for any $m \le n-i+1$, 
\begin{equation}\label{eqn:tE-open-basis}
\text{$\{ u_{ij}^{(k)} \mid 1 \le j \le m,\ 1 \le k \le y_{ij} \}$ is a basis for $V_{im} \subset \C^{w_i}$.}
\end{equation}
We proceed by induction on $i$.  For $i = 1$, choose any basis $\{u_{1j}^{(k)}\}$ satisfying~\eqref{eqn:tE-open-basis}.  For $i > 1$, define 
\[
u_{ij}^{(k)} = x(u_{i-1,j+1}^{(k)}) \qquad\text{if $1 \le k \le y_{i-1,j+1}$.}
\]
Since the map $V_{i-1,j+1}/V_{i-1,j} \to V_{ij}/V_{i,j-1}$ induced by $x$ is injective, these elements are linearly independent.  Therefore, it is possible to find additional elements $\{ u_{ij}^{(k)} \}_{1 \le j \le n-i+1, y_{i-1,j+1} < k \le y_{ij}}$ so that the whole collection forms a basis for $\C^{w_i}$ satisfying~\eqref{eqn:tE-open-basis}.  Since $x(V_{i1}) = 0$, we have $x(u_{i1}^{(k)}) = 0$ for all $i$ and $k$.  Thus, our basis satisfies~\eqref{eqn:model-repn}, and we conclude that $x \in \cO_Y$.
\end{proof}

\begin{corollary}
\label{cor:tE-dim}
We have $\dim \tE_Y = \dim \cO_Y$.
\end{corollary}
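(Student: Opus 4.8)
The plan is to deduce this immediately from Lemma~\ref{lem:tE-open}, using only elementary dimension theory for morphisms of complex algebraic varieties. In particular, the formulas for $\dim\Fl_Y$ and $\dim E(\bw)^V$ established in the lemmas above are not needed for this corollary itself; they will instead be combined with it in the next section to read off the explicit dimension formula for $\cO_Y$.

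First I would record that $\tE_Y$ is irreducible: it is a vector bundle over $\Fl_Y$, and $\Fl_Y \cong G(\bw)/G(\bw)^V$ is a homogeneous space for the connected group $G(\bw)$, hence irreducible. Consequently the nonempty open subset $U \subset \tE_Y$ furnished by Lemma~\ref{lem:tE-open} is dense in $\tE_Y$, so $\dim U = \dim \tE_Y$.

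Next I would analyze the morphism $\pi_Y|_U \colon U \to \cO_Y$. By Lemma~\ref{lem:tE-open} this map is a bijection. Since it is surjective onto $\cO_Y$, which is irreducible (being a $G(\bw)$-orbit and $G(\bw)$ connected), it is dominant, and every one of its fibers is a single point; hence its generic fiber has dimension $0$, and the theorem on dimensions of fibers of a dominant morphism gives
\[
\dim U = \dim \cO_Y + \dim(\text{generic fiber of } \pi_Y|_U) = \dim \cO_Y.
\]
Combining this with the preceding paragraph yields $\dim\tE_Y = \dim U = \dim\cO_Y$, as claimed.

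I expect essentially no obstacle here. The only point worth keeping in mind is the standard caveat that a bijective morphism of varieties need not be an isomorphism, so one should conclude via dimensions of fibers rather than by asserting that $\pi_Y|_U$ is an isomorphism onto $\cO_Y$; Lemma~\ref{lem:tE-open} supplies exactly the input needed (dense image together with singleton fibers) for the dimension count.
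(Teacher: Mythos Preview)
Your proposal is correct and matches the paper's approach: the paper states the corollary without proof, as an immediate consequence of Lemma~\ref{lem:tE-open}, and your argument spells out precisely the elementary dimension count that makes this deduction work. The only inaccuracy is bibliographic rather than mathematical: the explicit dimension formula (Theorem~\ref{thm:dim}) appears in the same section, not the next one.
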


\begin{corollary}
\label{cor:tE-image}
The image of $\pi_Y: \tE_Y \to E(\bw)$ is $\overline{\cO_Y}$.
\end{corollary}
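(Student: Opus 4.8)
The plan is to combine three facts established above: $\pi_Y$ is proper, $\tE_Y$ is smooth and irreducible, and (by Lemma~\ref{lem:tE-open}) there is a nonempty open subset $U \subseteq \tE_Y$ on which $\pi_Y$ restricts to a bijection $U \to \cO_Y$. The corollary is then a formal consequence, and the only thing to keep track of is that $U$ is genuinely nonempty; but this is immediate, since $\cO_Y$ is a nonempty orbit and $\pi_Y$ restricts to a bijection $U \to \cO_Y$.

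First I would observe that, $\tE_Y$ being irreducible and $U$ a nonempty open subset, $U$ is dense: $\overline{U} = \tE_Y$. Since $\pi_Y$ is proper it is in particular a closed map, so $\pi_Y(\tE_Y)$ is a closed subset of $E(\bw)$. It contains $\pi_Y(U) = \cO_Y$, and hence contains $\overline{\cO_Y}$; this gives the inclusion $\overline{\cO_Y} \subseteq \pi_Y(\tE_Y)$. For the reverse inclusion I would use the general fact that a continuous closed map $f$ satisfies $f(\overline{A}) = \overline{f(A)}$ for every subset $A$ of its source (the inclusion $f(\overline{A}) \subseteq \overline{f(A)}$ is continuity; the reverse follows because $f(\overline{A})$ is closed and contains $f(A)$). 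Applying this with $f = \pi_Y$ and $A = U$ yields
\[
\pi_Y(\tE_Y) = \pi_Y(\overline{U}) = \overline{\pi_Y(U)} = \overline{\cO_Y},
\]
which is exactly the assertion.

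Alternatively, one can finish the reverse inclusion by a dimension count: $\pi_Y(\tE_Y)$ is the continuous image of an irreducible space, hence irreducible, and it is closed; it contains the irreducible closed set $\overline{\cO_Y}$; and $\dim \pi_Y(\tE_Y) \le \dim \tE_Y = \dim \cO_Y = \dim \overline{\cO_Y}$ by Corollary~\ref{cor:tE-dim}. An irreducible closed subset cannot properly contain another irreducible closed subset of the same dimension, so $\pi_Y(\tE_Y) = \overline{\cO_Y}$. Either way, there is no substantive obstacle: the content is entirely in the already-proved Lemma~\ref{lem:tE-open} and Corollary~\ref{cor:tE-dim}, together with properness of $\pi_Y$.
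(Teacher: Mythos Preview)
Your proof is correct and follows essentially the same line as the paper's own argument: use irreducibility of $\tE_Y$ to get $\overline{U} = \tE_Y$, continuity to get $\pi_Y(\tE_Y) \subseteq \overline{\cO_Y}$, and properness to get the reverse inclusion. Your additional dimension-count alternative is not needed, but it is also fine.
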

\begin{proof}
Since $\tE_Y$ is irreducible, it is the closure of the open set $U$ that was introduced in Lemma~\ref{lem:tE-open}.  Its image must therefore be contained in the closure of $\pi_Y(\tE_Y) = \cO_Y$.  Since $\pi_Y$ is proper, its image is closed, so its image is precisely $\overline{\cO_Y}$.
\end{proof}

Combining the preceding results, we obtain the following dimension formula.

\begin{theorem}\label{thm:dim}
For any $Y \in \bP(\bw)$, we have
\[
\dim \cO_Y =
\hspace{-1em}\sum_{\substack{1 \le i \le n-1\\ 1 \le j < k \le n-i+1}}\hspace{-1em} y_{ij}y_{ik} + \hspace{-1em}\sum_{\substack{1 \le i \le n-1\\ 1 \le j < k \le n-i+1}}\hspace{-1em} y_{i+1,j}y_{ik}.
\]
\end{theorem}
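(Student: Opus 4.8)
The plan is to assemble the formula from the pieces already established, rather than to compute anything new. Recall that $\tE_Y$ was constructed as a vector bundle over $\Fl_Y$ whose fiber over a point $V \in \Fl_Y$ is the vector space $E(\bw)^V$. Since the total space of a vector bundle has dimension equal to the dimension of the base plus the rank of the bundle, the first step is simply to record
\[
\dim \tE_Y = \dim \Fl_Y + \dim E(\bw)^V
\]
for any $V \in \Fl_Y$. Implicit here is that $\dim E(\bw)^V$ is independent of the choice of $V$; this holds because $G(\bw)$ acts transitively on $\Fl_Y$, so all the fibers are isomorphic. That is the only point requiring even a word of justification, and it is immediate.

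Next I would invoke Corollary~\ref{cor:tE-dim}, which gives $\dim \cO_Y = \dim \tE_Y$, to conclude that $\dim \cO_Y = \dim \Fl_Y + \dim E(\bw)^V$. Finally I would substitute the two explicit expressions for $\dim \Fl_Y$ and $\dim E(\bw)^V$ proved in the lemma above computing these two quantities, and observe that their sum is precisely the right-hand side of the theorem. No rearrangement is needed, since the two summands in the statement are literally the formulas from that lemma.

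I do not expect any real obstacle here: all of the substantive work has already been carried out — namely, exhibiting $\tE_Y$ as a vector bundle over $\Fl_Y$, producing the open subset $U \subset \tE_Y$ on which $\pi_Y$ restricts to a bijection onto $\cO_Y$ (which is what forces $\dim \tE_Y = \dim \cO_Y$ via Corollary~\ref{cor:tE-dim}), and performing the two dimension counts. The proof of Theorem~\ref{thm:dim} is therefore pure bookkeeping: combine Corollary~\ref{cor:tE-dim} with additivity of dimension along the vector bundle $\tE_Y \to \Fl_Y$ and with the two dimension formulas already in hand.
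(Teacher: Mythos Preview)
Your proposal is correct and matches the paper's own argument exactly: the paper simply states that the theorem follows by ``combining the preceding results,'' meaning Corollary~\ref{cor:tE-dim} together with the lemma computing $\dim \Fl_Y$ and $\dim E(\bw)^V$, and the vector-bundle additivity you describe. There is nothing to add.
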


\section{Operations on triangular arrays}
\label{sec:operations}

This section is the ``combinatorial heart'' of the paper.  We describe a number of constructions one can carry out using triangular arrays, culminating in the definitions of two maps $\sT, \sT': \bP(\bw) \to \bP(\bw^*)$.  The main result of this section states that $\sT$ and $\sT'$ are both bijections, inverse to one another.  (In Section~\ref{sec:geometry}, we will learn that $\sT$ and $\sT'$ are actually the same map, but the proof of this is not combinatorial.)

\subsection{Elementary operations on triangular arrays}

Consider a triangular array $Y = (y_{ij})_{1 \le i \le n, 1 \le j \le n-i+1}$ of size $n$.  
We define $\Delc(Y)$ to be the triangular array of size $n-1$ obtained from $Y$ by deleting the first chute.  In other words,
\[
\Delc(Y)_{ij} = y_{i+1,j}
\qquad\text{for $1 \le i \le n-1$, $1 \le j \le n-i$.}
\]
Similarly, $\Dell(Y)$ is the triangular array of size $n-1$ obtained by deleting the last ladder:
\[
\Dell(Y)_{ij} = y_{ij}
\qquad\text{for $1 \le i \le n-1$, $1 \le j \le n-i$.}
\]

On the other hand, let $Q = (q_1, \ldots, q_{n+1})$ be a list of $n+1$ nonnegative integers.  Assume first that $q_j \ge y_{1,j-1}$ for $2 \le j \le n+1$.  Let $Y \cupc Q$ be the triangular array of size $n+1$ obtained from $Y$ by making $Q$ the new topmost chute.  In other words,
\[
(Y \cupc Q)_{ij} =
\begin{cases}
q_j & \text{if $i = 1$, $1 \le j \le n+1$,} \\
y_{i-1,j} & \text{if $2 \le i \le n+1$, $1 \le j \le n-i+2$.}
\end{cases}
\]
Similarly, if we instead assume that $q_1 \ge q_2 \ge \cdots \ge q_{n+1}$, then we can define a new triangular array $Y \cupl Q$ be adjoining $Q$ as the new bottommost ladder.  Explicitly, we put
\[
(Y \cupl Q)_{ij} =
\begin{cases}
y_{ij} & \text{if $1 \le i \le n$ and $1 \le j \le n-i+1$,} \\
q_{n-i+2} & \text{if $1 \le i \le n+1$ and $j = n-i+2$.}
\end{cases}
\]

Let $\Top(Y)$ denote the topmost chute of $Y$, regarded as an element of $\Z^n$:
\[
\Top(Y) = (y_{11}, y_{12}, \ldots, y_{1n}).
\]
Note that
\[
Y = \Delc(Y) \cupc \Top(Y).
\]

Next, we define $\Raise(Y,i,j)$ and $\Lower(Y,i,j)$ to be the triangular arrays obtained from $Y$ by replacing the entry in chute $i$, column $j$ by $y_{ij}+1$ and by $y_{ij}-1$, respectively.  There is a well-definedness issue here: because ladders are required to be weakly decreasing, $\Raise(Y,i,j)$ only makes sense if $j = 1$ or if $y_{ij} < y_{i+1,j-1}$.  Similarly, for $\Lower(Y,i,j)$ to make sense, we must either have $i = 1$ and $y_{1j} > 0$, or else $i > 1$ and $y_{ij} > y_{i-1,j+1}$.  When they make sense, it is clear from the definitions that
\[
\udim(\Raise(Y,i,j)) = \udim(Y) + \be_i
\qquad\text{and}\qquad
\udim(\Lower(Y,i,j)) = \udim(Y) - \be_i.
\]

\subsection{Invariants of triangular arrays}

In this subsection, we define various in\-te\-ger-valued functions on triangular arrays that will be used in the definitions of the algorithms below.  As above, let $Y$ be a triangular array of size $n$.  Let $k$ be an integer with $1 \le k \le n$.  Let
\[
\cI(Y,k) =
\begin{cases}
\text{the smallest integer $j \ge k$ such that $y_{1j} > 0$, or} \\
\text{$\infty$, if there is no such $j$.}
\end{cases}
\]
Next, let
\[
\cJ(Y,k) =
\begin{cases}
\text{the smallest integer $j > \cI(Y,k)$ such that $y_{1j} < y_{2,j-1}$,} \\
\text{\qquad if $1 < \cI(Y,k) < \infty$, or} \\
\text{$\infty$, if there is no $j$ as in the previous case, or if $\cI(Y,k) = \infty$.}
\end{cases}
\]
In other words, if $\cJ(Y,k) < \infty$, then it is the smallest integer${}\ge \cI(Y,k)$ such that $\Raise(Y, 1, \cJ(Y,k))$ is defined.  In particular, we always have $\cJ(Y,k) > 1$.

Finally, suppose $1 \le i \le n-k+1$.  Let
\[
\cK_i(Y,k) =
\max \big(\{1\} \cup \{j \mid \text{$2 \le j \le k$ and $y_{ij} < y_{i+1,j-1}$}\}\big).
\]
In other words, $\cK_i(Y,k)$ is the largest integer${}\le k$ such that $\Raise(Y,i,\cK_i(Y,k))$ is defined.  It is immediate from the definition that if $2 \le i \le n-k+1$, then we have
\begin{equation}\label{eqn:ckdel}
\cK_i(Y,k) = \cK_{i-1}(\Delc(Y),k).
\end{equation}

\subsection{Advanced operations on triangular arrays}

We will now introduce several more complicated operations on triangular arrays, and we prove a few lemmas about them.

\subsubsection*{Procedure $\sa$}  This operation takes as input a triple $(Y,i,k)$ where $Y$ is a triangular array of size $n$; $i$ is an integer such that $1 \le i \le n$; and $k$ is an integer such that $1 \le k \le n-i+1$.  Its output is also a triple consisting of a triangular array and two integers.  It is defined by
\[
\sa(Y,i,k) = (\Raise(Y,i, \cK_i(Y,k)), i-1, \cK_i(Y,k)).
\]
Note that as long as $i > 1$, the output of $\sa$ satisfies the conditions required of its input, so it makes sense to apply $\sa$ repeatedly.  

When $i > 1$, we can study how $\sa$ interacts with $\Delc$ using~\eqref{eqn:ckdel}.  Suppose
\[
\sa(Y,i,k) = (X,i-1,k')
\qquad\text{and}\qquad
\sa(\Delc(Y),i-1,k) = (X',i-2,k'').
\]
These are related by
\begin{equation}\label{eqn:bdelc}
X = X' \cupc \Top(Y)
\qquad\text{and}\qquad k' = k''.
\end{equation}

\subsubsection*{Procedure $\sA_i$} This operation takes as input a triangular array $Y$ of size $n$, where $1 \le i \le n$.  Its output it also a triangular array of size $n$.  Apply procedure $\sa$ $i$ times to the triple $(Y,i,n)$: the result has the form
\[
\underbrace{\sa \circ \cdots \circ \sa}_{\text{$i$ times}}(Y,i,n-i+1) = (X,0,k).
\]
We define $\sA_i(Y) = X$.  Since this sequence of $\sa$'s performs one $\Raise$ on each of the first $i$ chutes, we see that
\begin{equation}\label{eqn:dimtau}
\udim(\sA_i(Y)) = \udim(Y) + \be_1 + \be_2 + \cdots + \be_i.
\end{equation}

\subsubsection*{Procedure $\sB$}  This operation takes as input a pair $(Y,k)$, where $Y$ is a triangular array of size $n$; $k$ is an integer such that $1 \le k \le n$; and, moreover, we have $\cI(Y,k) < \infty$.  Its output is again a pair consisting of a triangular array and an integer (not necessarily satisfying any condition with respect to $\cI$). The definition is by induction on $n$.  If $n = 1$, we necessarily have $k = 1$.  In this case, we put
\[
\sB(Y,1) = (\Lower(Y,1,1),1).
\]
(The assumption that $\cI(Y,1) < \infty$ implies that this use of $\Lower$ makes sense.)

Suppose now that $n > 1$, and that $\sB$ is already defined for smaller diagrams.  If $\cJ(Y,k) = \infty$, we simply put
\[
\sB(Y,k) =
(\Lower(Y,1, \cI(Y,k)), 1).
\]
On the other hand, if $\cJ(Y,k) < \infty$, let $j_0 = \cJ(Y,k)$.  Our assumption implies that $\Raise(Y,1,j_0)$ makes sense, so $y_{1j_0} < y_{2,j_0-1}$.  In particular, $y_{2,j_0-1} \ne 0$, and hence $\cI(\Delc(Y), j_0-1) < \infty$.  By induction, $\sB(\Delc(Y), j_0-1)$ is already defined; let $(Z,r) = \sB(\Delc(Y), j_0 - 1)$.  Finally, set
\[
\sB(Y,k) = (\Lower(Z \cupc \Top(Y), 1, \cI(Y,k)), r+1).
\]
This completes the definition of $\sB$.  Note that the definition for $n = 1$ is a special case of the definition in the case where $\cJ(Y,k) = \infty$.

\begin{lemma}\label{lem:cdim}
Suppose that $\cI(Y,k) < \infty$, and let $(Y', q) = \sB(Y,k)$.  Then $\udim(Y') = \udim(Y) - (\be_1 + \be_2 + \cdots + \be_q)$.
\end{lemma}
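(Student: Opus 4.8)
The plan is to prove this by induction on $n$, following exactly the recursive structure of the definition of $\sB$. The claim is that a single call to $\sB(Y,k)$ performs exactly $q$ applications of $\Lower$, one on each of the first $q$ chutes (in chute $1$ through chute $q$), and nowhere else; the dimension-vector statement is then immediate from the fact that a single $\Lower(\,\cdot\,,i,j)$ decreases $\udim$ by $\be_i$, as recorded in Section~\ref{sec:operations}.

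For the base case $n=1$: here $k=1$ necessarily, and $\sB(Y,1) = (\Lower(Y,1,1),1)$, so $q=1$ and $\udim(Y') = \udim(Y) - \be_1$, as required. For the inductive step, I would split into the two cases in the definition. If $\cJ(Y,k) = \infty$, then $\sB(Y,k) = (\Lower(Y,1,\cI(Y,k)),1)$; since $\cI(Y,k)<\infty$ the $\Lower$ makes sense, $q=1$, and only chute $1$ is modified, so $\udim(Y') = \udim(Y) - \be_1$. If $\cJ(Y,k)<\infty$, write $j_0 = \cJ(Y,k)$ and $(Z,r) = \sB(\Delc(Y),j_0-1)$; by the inductive hypothesis applied to the size-$(n-1)$ array $\Delc(Y)$, we have $\udim(Z) = \udim(\Delc(Y)) - (\be_1 + \cdots + \be_r)$. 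Now I need to track $\udim$ through the three operations $Z \mapsto Z \cupc \Top(Y) \mapsto \Lower(Z\cupc\Top(Y),1,\cI(Y,k))$ that produce $Y'$, together with the fact that $q = r+1$. The key computational observations are: (i) $\Delc$ deletes the first chute, so the dimension vector of an array of size $n$ and that of its $\Delc$ of size $n-1$ are related by dropping the first coordinate and shifting — precisely, $\udim(\Delc(Y)) = (w_2,\ldots,w_n)$ if $\udim(Y) = (w_1,\ldots,w_n)$; (ii) $X \cupc \Top(Y)$ re-attaches $\Top(Y) = (y_{11},\ldots,y_{1n})$ as the new topmost chute, so $\udim(X \cupc \Top(Y)) = (\,|\Top(Y)|, \udim(X)\,)$ where $|\Top(Y)| = \sum_j y_{1j} = w_1$; (iii) $\Lower(\,\cdot\,,1,\cI(Y,k))$ subtracts $\be_1$. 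Combining, $\udim(Z\cupc\Top(Y)) = (w_1, \udim(Z)) = (w_1, w_2,\ldots,w_n) - (0,\be_1^{(n-1)} + \cdots + \be_r^{(n-1)})$, where $\be_m^{(n-1)}$ denotes the $m$-th standard vector of length $n-1$; reindexing, the subtracted vector becomes $\be_2 + \cdots + \be_{r+1}$ as a vector of length $n$. Then applying $\Lower(\,\cdot\,,1,\cdot\,)$ subtracts a further $\be_1$, giving $\udim(Y') = \udim(Y) - (\be_1 + \be_2 + \cdots + \be_{r+1}) = \udim(Y) - (\be_1 + \cdots + \be_q)$, since $q = r+1$ by definition of $\sB$.

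The only point requiring genuine care — and the one I expect to be the main obstacle in writing this cleanly — is the well-definedness of the intermediate operations: one must confirm that every $\Lower$ and every $\cupc$ invoked actually makes sense, so that $\udim$ behaves as claimed. The $\cupc$ in the recursive case needs $\Top(Y)$ to be a legal topmost chute for $Z$; this is exactly where the hypothesis $y_{1j_0} < y_{2,j_0-1}$ (i.e. $\cJ(Y,k)<\infty$) and the structure of $\sB$ on $\Delc(Y)$ come in, and it should be folded into a slightly strengthened inductive statement (for instance, that $\sB$ returns a bona fide triangular array and that all the $\Lower$'s are applied to chutes $1,\ldots,q$). The $\Lower$'s are legitimate because each is applied at a column index of the form $\cI(\,\cdot\,,\cdot\,)$ where the relevant top-row entry is positive by the defining property of $\cI$. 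Once that bookkeeping is in place, the dimension-vector identity is a routine consequence of the additivity of $\udim$ under $\cupc$ and its behavior under $\Lower$ and $\Delc$.
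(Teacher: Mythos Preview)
Your proposal is correct and follows essentially the same inductive argument as the paper: induct on $n$, handle the base case and the $\cJ(Y,k)=\infty$ case directly, and in the recursive case track $\udim$ through $\Delc$, the inductive hypothesis on $(Z,r)=\sB(\Delc(Y),j_0-1)$, then $\cupc\Top(Y)$ and the final $\Lower$. Your final paragraph about well-definedness is unnecessary here---the lemma takes $\sB(Y,k)$ as given, and the well-definedness of each intermediate $\Lower$ and $\cupc$ was already established inline when $\sB$ was defined, so you need not fold it into a strengthened induction.
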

\begin{proof}
We proceed by induction on $n$.  If $n = 1$, or if $\cJ(Y,k) = \infty$, we have $q = 1$, and $Y'$ is given by a $\Lower$. The claim is clear in this case.

If $\cJ(Y,k) < \infty$, suppose $\udim(Y) = (w_1, \ldots, w_n)$. Then $\udim(\Delc(Y)) = (w_2, \ldots, w_n)$.  Let $(Z,r)$ be as in the definition of $\sB$ above. By induction, $\dim(Z) = (w_2 -1, w_3 -1, \ldots, w_q-1, w_{q+1}, \ldots, w_n)$.  Then $Y' = \Lower(Z \cupc \Top(Y), 1, \cI(Y,k))$ has the dimension vector claimed in the lemma.
\end{proof}

\begin{lemma}\label{lem:qorder}
Let $(Y', q_1) = \sB(Y, k)$, and let $(Y'', q_2) = \sB(Y', k')$ for some $k' \geq k$.  Then $q_1 \geq q_2$.
\end{lemma}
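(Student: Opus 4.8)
The plan is to induct on the size $n$ of the triangular arrays. The base case $n=1$ is immediate: then $k = k' = 1$, and $\sB$ always returns second coordinate $1$, so $q_1 = q_2 = 1$. For the inductive step, assume $n > 1$. I would first record the auxiliary inequality $\cI(Y',k') \ge \cI(Y,k)$. Writing $i_0 = \cI(Y,k)$, inspection of the two branches in the definition of $\sB$ shows that the top chute of $Y'$ is obtained from $\Top(Y)$ by applying $\Lower(\,\cdot\,,1,i_0)$, so $y'_{1j} = y_{1j}$ for $j \ne i_0$ and $y'_{1,i_0} = y_{1,i_0}-1$. Since $y_{1j}=0$ for $k \le j < i_0$ and $k' \ge k$, the first column $\ge k'$ at which the top chute of $Y'$ is nonzero is $\ge i_0$.

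Next I would split on whether $\cJ(Y,k)$ is finite, recalling that $\cJ(Y,k) < \infty$ exactly when $\Raise(Y,1,j)$ is defined (equivalently $y_{1j} < y_{2,j-1}$) for some column $j > \cI(Y,k)$, in which case $\cJ(Y,k)$ is the least such $j$. If $\cJ(Y,k) = \infty$, then $q_1 = 1$, and it suffices to show $\cJ(Y',k') = \infty$, as then $q_2 = 1$. In this case $Y'$ has the same second chute as $Y$, and for any column $j > \cI(Y',k') \ge i_0$ one has $y'_{1j} = y_{1j}$ and $y'_{2,j-1} = y_{2,j-1}$; since $y_{1j} \ge y_{2,j-1}$ holds for every $j > i_0$, the same holds for $Y'$ at every column beyond $\cI(Y',k')$, whence $\cJ(Y',k') = \infty$.

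If instead $\cJ(Y,k) = j_0 < \infty$, write $(Z,r) = \sB(\Delc(Y), j_0-1)$, so that $q_1 = r+1$; a direct computation with the definitions of $\cupc$ and $\Lower$ gives $\Delc(Y') = Z$. If $\cJ(Y',k') = \infty$, then $q_2 = 1 \le r+1 = q_1$ and there is nothing more to prove. Otherwise, put $j_0' = \cJ(Y',k')$ and $(Z',r') = \sB(\Delc(Y'), j_0'-1) = \sB(Z, j_0'-1)$, so $q_2 = r'+1$. The heart of the matter is the inequality $j_0' \ge j_0$: granting it, the inductive hypothesis applied to $\Delc(Y)$ (of size $n-1$), with parameters $j_0-1 \le j_0'-1$ (both $\sB$-calls involved being defined), yields $r \ge r'$, and hence $q_1 = r+1 \ge r'+1 = q_2$.

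To prove $j_0' \ge j_0$ I would track precisely which entries of the top two chutes change in passing from $Y$ to $Y'$. The first chute changes only at column $i_0$, as observed above. For the second chute, note that $\Raise(Y,1,j_0)$ being defined forces $y_{2,j_0-1} > y_{1,j_0} \ge 0$, so $\cI(\Delc(Y),j_0-1) = j_0-1$, and in both branches of the definition of $\sB$ the first chute of $Z$ is obtained from the first chute of $\Delc(Y)$ by a single $\Lower$ at column $j_0-1$; thus the second chute of $Y'$ agrees with that of $Y$ except, possibly, at column $j_0-1$. Now for any column $j$ with $\cI(Y',k') < j < j_0$ we have $i_0 \le \cI(Y',k') < j < j_0$, so minimality of $j_0 = \cJ(Y,k)$ forces $y_{1j} \ge y_{2,j-1}$; and since $j \ne i_0$ and $j-1 < j_0-1$ we get $y'_{1j} = y_{1j}$ and $y'_{2,j-1} = y_{2,j-1}$, so $\Raise(Y',1,j)$ fails too. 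Hence no column strictly between $\cI(Y',k')$ and $j_0$ can equal $\cJ(Y',k')$, i.e.\ $j_0' \ge j_0$. I expect the inequality $\cJ(Y',k') \ge \cJ(Y,k)$ to be the main obstacle: it is the one spot where the argument needs the structural fact that a single application of $\sB$ perturbs the first chute of its input in exactly one column (the one indexed by $\cI$); everything else reduces to case-checking plus the induction.
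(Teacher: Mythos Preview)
Your proof is correct and follows essentially the same inductive strategy as the paper's: reduce to $\Delc(Y)$ via the recursive structure of $\sB$, and invoke the inequality $\cJ(Y',k') \ge \cJ(Y,k)$ to match the hypotheses of the inductive step. The paper's version simply asserts this inequality (and the companion fact that $\cJ(Y,k)=\infty$ forces $q_2=1$) without argument, whereas you supply the bookkeeping that justifies it by tracking exactly which entries of the first two chutes are altered by $\sB$.
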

\begin{proof}
If $\cJ(Y,k) = \infty$, then $q_1 = q_2 = 1$, and the lemma is verified.  Now assume that $\cJ(Y,k)$ exists.  Certainly 
\begin{equation}\label{eqn:onestep}
(\Delc(Y'), q_1 - 1) = \sB(\Delc(Y), \cJ(Y, k) - 1).
\end{equation}
Applying this to $(Y', k')$ yields 
\begin{equation}\label{eqn:twostep}
(\Delc(Y''), q_2 - 1) = \sB(\Delc(Y'), \cJ(Y', k') - 1).
\end{equation}
Note that $\cJ(Y', k') \geq \cJ(Y, k)$, so that~\eqref{eqn:onestep} and~\eqref{eqn:twostep} match the statement of the lemma for smaller triangles.  By induction, $q_1 - 1 \geq q_2 - 1$, and we are done.
\end{proof}

\begin{lemma}\label{lem:singletau}
Suppose that $\cI(Y,k) < \infty$, and let $(Y', q) = \sB(Y,k)$.  Then we have $\sa^q(Y',q,n-q+1) = (Y, 0, \cI(Y,k))$.  In particular, we have $\sA_q(Y') = Y$.
\end{lemma}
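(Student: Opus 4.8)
The plan is to establish the identity $\sa^q(Y',q,n-q+1) = (Y,0,\cI(Y,k))$ by induction on the size $n$ of $Y$, following the recursive structure of the definition of $\sB$. The ``in particular'' clause is then immediate: $\sA_q(Y')$ is by definition the first component of $\sa^q(Y',q,n-q+1)$, and by Lemma~\ref{lem:cdim} the array $Y'$ again has size $n$, so the base point of the iteration really is $(Y',q,n-q+1)$.

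First I would handle the case $\cJ(Y,k)=\infty$, which subsumes the base case $n=1$. Here $q=1$ and $Y'=\Lower(Y,1,\cI(Y,k))$, so $\sa^q(Y',q,n-q+1)=\sa(Y',1,n)=(\Raise(Y',1,\cK_1(Y',n)),\,0,\,\cK_1(Y',n))$, and it suffices to check that $\cK_1(Y',n)=\cI(Y,k)$; then $\Raise(Y',1,\cI(Y,k))=\Raise(\Lower(Y,1,\cI(Y,k)),1,\cI(Y,k))=Y$. Abbreviating $\cI=\cI(Y,k)$: the entries of $Y'$ agree with those of $Y$ except that $y'_{1,\cI}=y_{1,\cI}-1$. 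If $\cI\ge 2$ then $\Raise(Y',1,\cI)$ is defined, since $y'_{1,\cI}=y_{1,\cI}-1<y_{2,\cI-1}=y'_{2,\cI-1}$ by the ladder condition $y_{1,\cI}\le y_{2,\cI-1}$; and for $j>\cI$ the hypothesis $\cJ(Y,k)=\infty$ says that $\Raise(Y,1,j)$ is undefined, i.e.\ $y_{1j}\ge y_{2,j-1}$, whence $\Raise(Y',1,j)$ is undefined too (the relevant entries of $Y'$ and $Y$ coincide). Thus $\cK_1(Y',n)=\cI$, as needed.

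The substantive case is $\cJ(Y,k)<\infty$. Put $j_0=\cJ(Y,k)$ and $(Z,r)=\sB(\Delc(Y),j_0-1)$, so that $Y'=\Lower(Z\cupc\Top(Y),\,1,\,\cI(Y,k))$ and $q=r+1$, with $r\ge 1$ since $\sB$ always returns a positive second component; recall also that $\cJ(Y,k)<\infty$ forces $\cI(Y,k)\ge 2$. Write $P=\Top(Y')$, i.e.\ $\Top(Y)$ with its $\cI(Y,k)$-th entry lowered by one, so $Y'=Z\cupc P$ and $\Delc(Y')=Z$. The core of the proof is an iteration of \eqref{eqn:bdelc}: a short induction on $t$ shows that for $0\le t\le r$ one has $\sa^t(Y',r+1,n-r)=(W_t\cupc P,\,r+1-t,\,k_t)$, where $(W_t,\,r-t,\,k_t):=\sa^t(Z,r,n-r)$; at each step $\Top(W_t\cupc P)=P$ and the third coordinates agree, so \eqref{eqn:bdelc} applies as long as $r+1-t>1$. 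Taking $t=r$ and invoking the inductive hypothesis of the lemma for $\Delc(Y)$ (of size $n-1$) at the index $j_0-1$ yields $\sa^r(Z,r,n-r)=(\Delc(Y),\,0,\,\cI(\Delc(Y),j_0-1))$; and $\cI(\Delc(Y),j_0-1)=j_0-1$, because $\Delc(Y)_{1,j_0-1}=y_{2,j_0-1}\ne 0$. Hence $\sa^r(Y',r+1,n-r)=(\Delc(Y)\cupc P,\,1,\,j_0-1)=(\Lower(Y,1,\cI(Y,k)),\,1,\,j_0-1)$, the last equality because $\Delc(Y)\cupc P$ has first chute $P$ and all other chutes equal to those of $Y$. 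One last application of $\sa$ then finishes, provided $\cK_1(\Lower(Y,1,\cI(Y,k)),\,j_0-1)=\cI(Y,k)$: indeed $\Raise(\Lower(Y,1,\cI(Y,k)),1,\cI(Y,k))$ is defined (as in the previous paragraph, using $\cI(Y,k)\ge 2$), while for $\cI(Y,k)<j\le j_0-1$ the defining property of $\cJ$ together with the ladder condition gives $y_{1j}=y_{2,j-1}$, so $\Raise(Y,1,j)$ --- hence $\Raise(\Lower(Y,1,\cI(Y,k)),1,j)$ --- is undefined. Therefore the final $\sa$ produces $(\Raise(\Lower(Y,1,\cI(Y,k)),1,\cI(Y,k)),\,0,\,\cI(Y,k))=(Y,\,0,\,\cI(Y,k))$, as required.

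I expect the bookkeeping in the last paragraph to be the main obstacle: keeping track of how the extra top chute ``$\cupc P$'' and the three index arguments propagate through the $r$-fold iteration of $\sa$ governed by \eqref{eqn:bdelc}, and then pinning down the two index identities $\cI(\Delc(Y),j_0-1)=j_0-1$ and $\cK_1(\Lower(Y,1,\cI(Y,k)),j_0-1)=\cI(Y,k)$, each of which reduces to the ladder-monotonicity of $Y$ and the precise meaning of $\cI$ and $\cJ$.
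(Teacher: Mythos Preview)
Your proof is correct and follows essentially the same approach as the paper's: induction on $n$, splitting on whether $\cJ(Y,k)$ is finite, iterating \eqref{eqn:bdelc} to reduce to the inductive hypothesis for $\Delc(Y)$, and then verifying the final $\cK_1$-computation. In fact you are slightly more careful than the paper in two places: you spell out the $r$-fold iteration of \eqref{eqn:bdelc} (tracking the preserved top chute $P$), and you explicitly verify $\cI(\Delc(Y),j_0-1)=j_0-1$, a step the paper uses implicitly when it replaces the third coordinate $\cI(\Delc(Y),j_1-1)$ by $j_1-1$.
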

\begin{proof}
We proceed by induction on the size $n$ of the triangular array.  Throughout the proof, we let $j_0 = \cI(Y,k)$.

Suppose first that $\cJ(Y,k) = \infty$.  (This includes the special case where $n = 1$.)  Recall that $j_0$ is the smallest integer${}\ge k$ such that $y_{1,j_0} \ne 0$.  Moreover, if $j > j_0$, then $\Raise(Y,1,j)$ is not defined.  From the definition of $\sB$, we have $q = 1$ and $Y' = \Lower(Y,1,j_0)$.  Since $Y$ and $Y'$ differ only at the entries at position $ij$, we see that $\Raise(Y',1,j)$ is also not defined for $j > j_0$.  On the other hand, $\Raise(Y',1,j_0)$ clearly is defined: it is equal to $Y$.  We have just shown that $\cK_1(Y',n) = j_0$.  As a consequence, we have
\begin{multline*}
\sa(Y',1,n) = (\Raise(Y',1,\cK_1(Y',n)),0,\cK_1(Y',n))\\
 = (\Raise(Y',1,j_0),0,j_0) = (Y,0,\cI(Y,k)),
\end{multline*}
as desired.

Now suppose that $\cJ(Y,k) < \infty$, and let $j_1 = \cJ(Y,k)$. From the definition of $\sB$, we see that $\sB(\Delc(Y), j_1-1)$ is of the form $(Z,q-1)$ for some triangular array $Z$ of size $n-1$.  By induction, we have
\[
\sa^{q-1}(Z,q-1,n-q+1) = (\Delc(Y),0,\cI(\Delc(Y),j_1-1)).
\]
Recall from the definition of $\sB$ that
\begin{equation}\label{eqn:singletau}
Y' = \Lower(Z \cupc \Top(Y),1,j_0).
\end{equation}
In particular, we have $\Delc(Y') = Z$.  Applying~\eqref{eqn:bdelc} $q-1$ times, we obtain
\[
\sa^{q-1}(Y',q,n-q+1) = (\Delc(Y) \cupc \Top(Y'),1, \cI(\Delc(Y),j_1-1)).
\]
To finish the proof of the lemma, we must show that if we apply $\sa$ one more time to this equation, the result is $(Y,0,j_0)$.  Let $Y'' = \Delc(Y) \cupc \Top(Y')$.  It follows from~\eqref{eqn:singletau} that $Y = \Raise(Y'', 1, j_0)$, so to complete the proof, it is enough to show that $\cK_1(Y'',j_1-1) = j_0$.  Denote the entries of $Y''$ by $y''_{ij}$.  We have
\begin{gather*}
y''_{1,j_0} = y_{1,j_0} - 1 < y_{2,j_0-1} = y''_{2,j_0-1}, \\
y''_{1,j} = y_{1,j} = y_{2,j-1} = y''_{2,j-1} \qquad \text{for $j_0 < j \le j_1-1$,}
\end{gather*}
where the latter holds by the definition of $\cJ(Y,k)$.  These two conditions together tell us that $\cK_1(Y'',j_1-1) = j_0$, as desired.
\end{proof}

\subsection{The combinatorial Fourier transform and its inverse}
\label{ss:cft}

We are now ready to define the main combinatorial algorithms in the paper.  Let $Y$ be a triangular array of size $n$.  We will define the \emph{combinatorial Fourier transform of $Y$}, denoted by $\sT(Y)$, by induction on $n$.  If $n = 1$, we set $\sT(Y) = Y$.  Otherwise, we set
\[
\sT(Y) = \sA_n^{y_{1,n}} \sA_{n-1}^{y_{2,n-1} - y_{1,n}} \cdots \sA_1^{y_{n,1} - y_{n-1,2}}(\sT(\Dell(Y)) \cupc (0,\ldots, 0))
\]
Note that $\sT(Y)$ is again a triangular array of size $n$.

We will also define the \emph{inverse combinatorial Fourier transform of $Y$}, denoted by $\sICFT(Y)$, by induction on $n$.  If $n = 1$, we again set $\sICFT(Y) = Y$.  Suppose now that $n > 1$, and let $(w_1, \ldots, w_n) = \udim(Y)$.  Recall that $\sB(Y,1)$ is defined as long as the top chute of $Y$ is nonzero, or equivalently, if $w_1 > 0$.  If this is the case, set $(Y',q_1) = \sB(Y,1)$.  Recall from Lemma~\ref{lem:cdim} that the first coordinate of $\udim(Y')$ is $w_1 - 1$.  If this is still positive, we can apply $\sB$ again.  In general, we produce a sequence as follows:
\begin{align*}
(Y',q_1) &= \sB(Y,1), \\
(Y'',q_2) &= \sB(Y',1), \\
&\vdots \\
(Y^{(w_1)},q_{w_1}) &= \sB(Y^{(w_1-1)},1).
\end{align*}
(The top chute of $Y^{(w_1)}$ is zero, so we cannot apply $\sB$ again.)  Define a list of integers $P = (p_1, \ldots, p_n)$ by
\[
p_j = \#\{ k \mid q_k \ge j \}.
\]
(Since $1 \le q_k \le n$ for all $k$, we have $p_1 = w_1$.)
Finally, we define $\sICFT(Y)$ by
\[
\sICFT(Y) = \sICFT(\Delc(Y^{(w_1)})) \cupl P.
\]
The terminology is justified by Theorem~\ref{thm:cftbij} below.

\begin{lemma}
If $\udim(Y) = \bw$, then $\udim(\sT(Y)) = \udim(\sICFT(Y)) = \bw^*$.
\end{lemma}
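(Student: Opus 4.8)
The statement is that both $\sT$ and $\sICFT$ send a triangular array with dimension vector $\bw = (w_1, \ldots, w_n)$ to an array with dimension vector $\bw^* = (w_n, \ldots, w_1)$. Since both maps are defined by induction on the size $n$, I plan to prove both assertions by induction on $n$ as well, tracking dimension vectors through the elementary operations, using the dimension-bookkeeping identities already established in the excerpt, namely \eqref{eqn:dimtau} for $\sA_i$ and Lemma~\ref{lem:cdim} for $\sB$.

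For $\sT$: the base case $n = 1$ is trivial since $\sT(Y) = Y$. For the inductive step, write $\Top(Y) = (y_{11}, \ldots, y_{1n})$ and set $\bv = \udim(\Dell(Y)) = (w_1 - y_{1n},\, w_2,\, \ldots,\, w_{n-1})$, an $(n-1)$-tuple (note $\Dell$ deletes the last ladder, so only the first chute-sum is affected, losing exactly $y_{1n}$). By the inductive hypothesis, $\udim(\sT(\Dell(Y))) = \bv^* = (w_{n-1}, \ldots, w_2, w_1 - y_{1n})$, and adjoining a zero chute on top gives $\udim(\sT(\Dell(Y)) \cupc (0,\ldots,0)) = (0,\, w_{n-1},\, \ldots,\, w_2,\, w_1 - y_{1n})$, an $n$-tuple. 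Now I apply \eqref{eqn:dimtau} repeatedly: first $\sA_1^{y_{n,1} - y_{n-1,2}}$ adds $(y_{n1} - y_{n-1,2})\be_1$, then $\sA_2^{y_{2,n-1}-y_{1,n}}$... wait, the exponents run from chute-$1$ differences up to chute-$n$, so $\sA_i$ is applied $y_{n-i+1,\,i} - y_{n-i,\,i+1}$ times (reading the formula in the excerpt, the exponent of $\sA_i$ is the $(n-i+1,i)$-to-$(n-i,i+1)$ difference along the top ladder direction), adding that multiple of $\be_1 + \cdots + \be_i$. Summing the contributions: the $\be_m$-coordinate receives $\sum_{i \ge m}(\text{exponent of }\sA_i)$, which telescopes. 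The key telescoping identity is that these differences are precisely the entries $\nu(Y^*)$ read along a ladder, and the partial sums reconstruct the entries of the top chute of $Y$ in reverse. Concretely, I expect the $m$-th coordinate of $\udim(\sT(Y))$ to come out as $w_{n+1-m}$ after the telescoping, which is exactly $(\bw^*)_m$; the $m=1$ coordinate starting from $0$ must pick up $w_n$, and the $m=n$ coordinate starting from $w_1 - y_{1n}$ must pick up $y_{1n}$ to become $w_1$. Verifying this telescoping is the main computational content, but it is routine given Lemma~\ref{lem:pb-bij}.

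For $\sICFT$: again $n = 1$ is trivial. For $n > 1$, the excerpt constructs the sequence $(Y^{(k)}, q_k) = \sB(Y^{(k-1)}, 1)$ for $k = 1, \ldots, w_1$, each step lowering the first coordinate by one (by Lemma~\ref{lem:cdim}, the first coordinate of each $\udim$ drops by exactly $1$ since $q_k \ge 1$), so that $\udim(Y^{(w_1)})$ has first coordinate $0$. By Lemma~\ref{lem:cdim}, the $k$-th application subtracts $\be_1 + \cdots + \be_{q_k}$ from the dimension vector, so after all $w_1$ steps the total subtracted from coordinate $m$ is $\#\{k : q_k \ge m\} = p_m$. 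Hence $\udim(Y^{(w_1)}) = (w_1 - p_1,\, w_2 - p_2,\, \ldots,\, w_n - p_n) = (0,\, w_2 - p_2,\, \ldots,\, w_n - p_n)$ since $p_1 = w_1$. Then $\udim(\Delc(Y^{(w_1)})) = (w_2 - p_2,\, \ldots,\, w_n - p_n)$, an $(n-1)$-tuple. By the inductive hypothesis, $\udim(\sICFT(\Delc(Y^{(w_1)}))) = (w_n - p_n,\, \ldots,\, w_2 - p_2)$. Adjoining $P = (p_1, \ldots, p_n)$ as a new bottommost ladder via $\cupl$: tracing the definition of $\cupl$, the new array has size $n$, and its $m$-th chute-sum is the old $m$-th chute-sum plus the entry of $P$ contributed to that chute. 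I need to check that $\cupl$ distributes the entries $p_1, \ldots, p_n$ of $P$ so that the $m$-th chute picks up $p_{n+1-m}$; then the $m$-th chute-sum becomes $(w_{n+1-m} - p_{n+1-m}) + p_{n+1-m} = w_{n+1-m} = (\bw^*)_m$, with the $m=1$ chute (previously empty in the size-$(n-1)$ array, so contributing $0$ in the relevant slot) getting $p_n$... I should instead carefully index: $\cupl$ puts $q_{n-i+2}$ into position $(i, n-i+2)$, so the new bottom ladder entries are distributed one per chute, and chute $i$ gains $p_{n-i+2}$ for $i = 1, \ldots, n$ — matching up against the inductively-known chute sums of the size-$(n-1)$ array indexed by $i = 1, \ldots, n-1$ (which occupy chutes $1$ through $n-1$ of the new array, with chute $n$ being entirely the new ladder entry $p_1 = w_1$). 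So chute $i$'s sum is, for $i \le n-1$, the $i$-th chute-sum of $\sICFT(\Delc(Y^{(w_1)}))$ plus $p_{n-i+2}$; I must check the indexing lines up with $(\bw^*)_i = w_{n-i+1}$. The hard part here is getting these index shifts exactly right and confirming the $p_j$ are distributed correctly by $\cupl$ so that the two contributions combine to $w_{n-i+1}$; once the bookkeeping is set up this is a direct verification.

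**Main obstacle.** The real work in both halves is the index-chasing: making sure the exponents in the definition of $\sT$ and the entries of $P$ in the definition of $\sICFT$ telescope/recombine exactly into the reversed dimension vector. There is no conceptual difficulty — everything follows from \eqref{eqn:dimtau}, Lemma~\ref{lem:cdim}, and the combinatorial definitions of $\cupc$, $\cupl$, $\Dell$, $\Delc$ — but the off-by-one risks in the ladder/chute indexing are the only place one can go wrong, so I would present the $\sT$ computation with the telescoping sum written out once and then handle $\sICFT$ in parallel, citing Lemma~\ref{lem:cdim} for the per-step dimension change and $p_j = \#\{k : q_k \ge j\}$ for the aggregate.
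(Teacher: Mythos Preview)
Your approach is exactly the paper's: induction on $n$, tracking dimension vectors through the elementary operations via \eqref{eqn:dimtau} and Lemma~\ref{lem:cdim}. The $\sICFT$ half is essentially correct, modulo some wobbling on the $\cupl$ indexing (the correct statement is that chute $i$ of the size-$n$ array $\sICFT(\Delc(Y^{(w_1)})) \cupl P$ gains $p_{n-i+1}$, not $p_{n-i+2}$, since the base array has size $n-1$; this is what makes chute $n$ pick up $p_1 = w_1$, as you correctly note a sentence later).

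However, there is a concrete error in the $\sT$ half that would derail the telescoping. You write $\udim(\Dell(Y)) = (w_1 - y_{1n},\, w_2,\, \ldots,\, w_{n-1})$ and justify this by saying ``$\Dell$ deletes the last ladder, so only the first chute-sum is affected.'' This is false: the last ladder has one entry in \emph{every} chute (namely $y_{i,n-i+1}$ in chute $i$), so
\[
\udim(\Dell(Y)) = (w_1 - y_{1,n},\ w_2 - y_{2,n-1},\ \ldots,\ w_{n-1} - y_{n-1,2}).
\]
With your incorrect starting vector $(0, w_{n-1}, \ldots, w_2, w_1 - y_{1n})$, the telescoping cannot produce $\bw^*$: for instance the second coordinate would need to go from $w_{n-1}$ to $w_{n-1}$, forcing the total $\be_2$-contribution from the $\sA_i$'s to vanish, which it does not in general. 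With the correct starting vector, the $\be_k$-coefficient contributed by the $\sA_i$'s is $\sum_{i \ge k}(y_{n-i+1,i} - y_{n-i,i+1}) = y_{n-k+1,k}$, and this exactly fills the deficit $w_{n-k+1} - (w_{n-k+1} - y_{n-k+1,k})$ in each coordinate. Once this is fixed, your argument goes through and matches the paper's.
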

\begin{proof}
Let us first prove the statement for $\sT$.  We proceed by induction on the size $n$ of the triangular array involved.  If $n = 1$, the statement is obvious.  Otherwise, suppose $\bw = (w_1,\ldots,w_n)$, and let $\bw' = \udim(\Dell(Y))$.  Writing $\bw' = (w'_1, \ldots, w'_{n-1})$, we clearly have
\[
w'_i = w_i - y_{i,n-i+1}.
\]
By induction, $\udim(\sT(\Dell(Y))) = (\bw')^*$, so
\[
\udim(\sT(\Dell(Y) \cupc (0,\ldots,0)) = (0, w_{n-1} - y_{n-1,2}, \ldots, w_2 - y_{2,n-1}, w_1 - y_{1,n}).
\]
Next,~\eqref{eqn:dimtau} implies that
\begin{align*}
\udim \sA_n^{y_{1,n}} &\sA_{n-1}^{y_{2,n-1} - y_{1,n}} \cdots \sA_1^{y_{n,1} - y_{n-1,2}}(\sT(\Dell(Y)) \cupc (0,\ldots, 0)) \\
&= \udim(\sT(\Dell(Y)) \cupc (0,\ldots, 0)) \\
&\qquad + \sum_{i=1}^n (y_{i,n-i+1} - y_{i-1,n-i+2})(\be_1 + \be_2 + \cdots + \be_{n-i+1}).
\end{align*}
(On the right-hand side, $y_{0,n+1}$ should be understood to be $0$.)  The coefficient of $\be_k$ is $\sum_{i=1}^{n-k+1} (y_{i,n-i+1} - y_{i-1,n-i+2}) = y_{n-k+1,k}$.  Using the fact that $y_{n,1} = w_n$, we conclude that
\begin{align*}
\udim \sT(Y) &= \udim(\sT(\Dell(Y)) \cupc (0,\ldots, 0)) + \sum_{k=1}^n y_{n-k+1,k}\be_k \\
&= \udim(\sT(\Dell(Y)) \cupc (0,\ldots, 0))  + (y_{n,1}, y_{n-1,2}, \ldots, y_{1,n}) \\
&= (w_n, w_{n-1}, \ldots, w_1) = \bw^*.
\end{align*}

For $\sICFT$, we again proceed by induction on $n$.  Consider the triangular arrays $Y', Y'', \ldots, Y^{(w_1)}$ as in the definition of $\sICFT$.  Applying Lemma~\ref{lem:cdim} $w_1$ times, we see that
\[
\udim(Y^{(w_1)}) = \udim(Y) - \sum_{k=1}^{w_1} (\be_1 + \cdots + \be_{q_k}).
\]
The coefficient of $\be_j$ is $\#\{ k \mid q_k \ge  j \} = p_j$, so
\[
\udim(Y^{(w_1)}) = \bw - P = (0, w_2 - p_2, w_3 - p_3, \ldots, w_n - p_n).
\]
By induction, we have $\udim (\sICFT(\Delc(Y^{(w_1)}))) = (w_n-p_n, w_{n-1} - p_{n-1}, \ldots, w_2 - p_2)$, and then
\begin{multline*}
\udim (\sICFT(Y))
= \udim (\sICFT(\Delc(Y^{(w_1)}))) + (p_n,p_{n-1}, \ldots, p_1)\\
 = (w_n, w_{n-1}, \ldots, w_1) = \bw^*,
\end{multline*}
as desired.
\end{proof}

The previous lemma tells us that both $\sT$ and $\sICFT$ can be regarded as maps from $\bP(\bw)$ to $\bP(\bw^*)$, or vice versa.

\begin{theorem}\label{thm:cftbij}
Let $\bw \in \Z_{\ge0}^n$.  The maps $\sT: \bP(\bw) \to \bP(\bw^*)$ and $\sICFT: \bP(\bw^*) \to \bP(\bw)$ are both bijections, and they are inverse to one another.
\end{theorem}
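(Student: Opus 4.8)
The plan is to prove that $\sT(\sICFT(Y)) = Y$ for every triangular array $Y$, and to deduce the theorem from this. Granting the identity: taking $Y \in \bP(\bw^*)$ shows that $\sICFT: \bP(\bw^*) \to \bP(\bw)$ is injective, and since $\bP(\bw)$ and $\bP(\bw^*)$ are finite of the same cardinality by Corollary~\ref{cor:samecard}, $\sICFT$ is bijective; the identity then gives $\sT = \sICFT^{-1}$, which is the full statement. I would prove $\sT(\sICFT(Y)) = Y$ by induction on the size $n$ of $Y$; when $n = 1$ both maps are the identity, so there is nothing to check.

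For the inductive step, fix $Y$ with $\udim(Y) = \bw = (w_1, \ldots, w_n)$ and unwind the definition of $\sICFT(Y)$: let $Y = Y^{(0)}, Y^{(1)}, \ldots, Y^{(w_1)}$ be the sequence produced by repeatedly applying $\sB(-,1)$, with recorded integers $q_1, \ldots, q_{w_1}$, and let $P = (p_1, \ldots, p_n)$ with $p_j = \#\{k : q_k \ge j\}$, so that $\sICFT(Y) = \sICFT(\Delc(Y^{(w_1)})) \cupl P$. Applying $\sT$ and using that deleting the last ladder of $W \cupl P$ returns $W$, the definition of $\sT$ gives
\[
\sT(\sICFT(Y)) = \sA_n^{a_1}\sA_{n-1}^{a_2}\cdots\sA_1^{a_n}\bigl(\sT(\sICFT(\Delc(Y^{(w_1)}))) \cupc (0,\ldots,0)\bigr),
\]
where the exponents come from the last ladder $P$ of $\sICFT(Y)$: concretely, $\sA_j$ occurs with exponent $p_j - p_{j+1}$ (with $p_{n+1} = 0$). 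By the induction hypothesis applied to the size-$(n-1)$ array $\Delc(Y^{(w_1)})$, one has $\sT(\sICFT(\Delc(Y^{(w_1)}))) = \Delc(Y^{(w_1)})$; and since $\udim(Y^{(w_1)}) = \bw - P$ has vanishing first coordinate (by $w_1$-fold application of Lemma~\ref{lem:cdim} together with $p_1 = w_1$), the top chute of $Y^{(w_1)}$ is zero, so $\Delc(Y^{(w_1)}) \cupc (0,\ldots,0) = Y^{(w_1)}$. Hence it remains to prove that the operator word $\sA_n^{a_1}\cdots\sA_1^{a_n}$ sends $Y^{(w_1)}$ to $Y$.

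The last step is where the earlier lemmas come in. This word applies its letters in weakly increasing order of subscript, and $\sA_j$ occurs $p_j - p_{j+1} = \#\{k : q_k = j\}$ times; since $q_1 \ge q_2 \ge \cdots \ge q_{w_1}$ by Lemma~\ref{lem:qorder}, the word therefore coincides with the composite $\sA_{q_1}\circ\sA_{q_2}\circ\cdots\circ\sA_{q_{w_1}}$, in which $\sA_{q_{w_1}}$ is applied first to $Y^{(w_1)}$. Now Lemma~\ref{lem:singletau} says that $(Y^{(s)}, q_s) = \sB(Y^{(s-1)}, 1)$ forces $\sA_{q_s}(Y^{(s)}) = Y^{(s-1)}$, so this composite carries $Y^{(w_1)} \mapsto Y^{(w_1-1)} \mapsto \cdots \mapsto Y^{(0)} = Y$, finishing the induction.

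The only genuinely structural input is Lemma~\ref{lem:singletau}, which exhibits $\sB(-,1)$ as a one-sided inverse to the family of operators $\sA_q$; the rest is unwinding definitions. I expect the main obstacle to be the combinatorial bookkeeping in the last two paragraphs: one must simultaneously track the reversal $\bw \mapsto \bw^*$, the order in which $\sT$'s definition applies the $\sA_i$, and the order in which $\sICFT$'s definition applies $\sB$, and verify both the multiset identity (that the number of $\sA_j$'s equals $\#\{k : q_k = j\}$, which is exactly the definition of $P$ transported through the last-ladder conventions built into $\sT$) and the ordering identity (that the weakly increasing operator word is literally $\sA_{q_1}\circ\cdots\circ\sA_{q_{w_1}}$, for which the monotonicity of Lemma~\ref{lem:qorder} is precisely what is needed). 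I would also treat the case $w_1 = 0$ explicitly, where the operator word and the sequence of $\sB$'s are both empty and the claim reduces at once to the induction hypothesis for $\Delc(Y)$.
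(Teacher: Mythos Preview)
Your proposal is correct and follows essentially the same approach as the paper's own proof: an induction on $n$ establishing $\sT\circ\sICFT=\id$, using Lemma~\ref{lem:singletau} to undo each $\sB$ by the corresponding $\sA_{q_k}$, Lemma~\ref{lem:qorder} to match the resulting composite with the operator word in the definition of $\sT$, and Corollary~\ref{cor:samecard} to pass from one-sided inverse to mutual bijection. The only cosmetic difference is that the paper records the intermediate identity $\sICFT(\Delc(Y^{(w_1)})) = \Dell(\sICFT(Y))$ explicitly, whereas you phrase the same fact as $\Dell(W\cupl P)=W$.
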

\begin{proof}
We begin by showing that $\sT \circ \sICFT$ is the identity map on $\bP(\bw^*)$.  We proceed by induction on $n$. If $n = 1$, the claim is obvious.  Otherwise, let $\bw = (w_1, \ldots, w_n)$.  Let $Y \in \bP(\bw^*)$, and let $Y', Y'', \ldots, Y^{(w_n)}$ be as in the definition of $\sICFT$.  By Lemma~\ref{lem:singletau}, we have
\[
Y = \sA_{q_1} \sA_{q_2} \cdots \sA_{q_{w_n}}(Y^{(w_n)}).
\]
Next, Lemma~\ref{lem:qorder} tells us that $q_1 \ge q_2 \ge \cdots \ge q_{w_n}$.  So the preceding equation can be rewritten as 
\begin{equation}\label{eqn:cftbij1}
Y = \sA_n^{p_n} \sA_{n-1}^{p_{n-1} - p_n} \cdots \sA_2^{p_2 - p_3} \sA_1^{p_1 - p_2}(Y^{(w_n)}).
\end{equation}
Now, the top chute of $Y^{(w_n)}$ is zero, so $Y^{(w_n)} = \Delc(Y^{(w_n)}) \cupc (0,\ldots,0)$.  Since $\Delc(Y^{(w_n)})$ is a triangular array of smaller size, by induction, we have
\[
\Delc(Y^{(w_n)}) = \sT(\sICFT(\Delc(Y^{(w_n)}))).
\]
Next, from the definition of $\sICFT$, we see that
\[
\sICFT(\Delc(Y^{(w_n)})) = \Dell(\sICFT(Y)).
\]
Combining these observations, we find that
\begin{multline*}
Y^{(w_n)} = \Delc(Y^{(w_n)}) \cupc (0,\ldots,0)
= \sT(\sICFT(\Delc(Y^{(w_n)}))) \cupc (0, \ldots, 0)\\
= \sT(\Dell(\sICFT(Y))) \cupc (0, \ldots 0).
\end{multline*}
Finally, we substitute this into~\eqref{eqn:cftbij1} to obtain
\[
Y = \sA_n^{p_n} \sA_{n-1}^{p_{n-1} - p_n} \cdots \sA_2^{p_2 - p_3} \sA_1^{p_1 - p_2}(\sT(\Dell(\sICFT(Y))) \cupc (0, \ldots 0)).
\]
Since the numbers $p_1, p_2, \ldots, p_n$ are precisely those on the bottom ladder of $\sICFT(Y)$, this formula says that $Y = \sT(\sICFT(Y))$, as desired.

We now know that $\sT \circ \sICFT$ is the identity map.  In particular, $\sICFT$ is injective, and $\sT$ is surjective.  Since the finite sets $\bP(\bw)$ and $\bP(\bw^*)$ have the same cardinality (Corollary~\ref{cor:samecard}), we conclude that $\sT$ and $\sICFT$ are both bijections.
\end{proof}

\section{Fourier--Sato transforms}
\label{sec:geometry}

In this section, we prove the main result of the paper: both $\sT$ and $\sICFT$ compute the Fourier--Sato transforms of simple perverse sheaves on $E(\bw)$.

\subsection{Fourier--Sato transform}

Let $\bw \in \Z_{\ge 0}^n$.  There is an obvious isomorphism $G(\bw) \cong G(\bw^*)$, given by $(g_1, g_2, \ldots, g_n) \mapsto (g_n, g_{n-1}, \ldots, g_1)$.  In this section, we will identify these groups via this isomorphism.

Consider the pairing $\langle{-},{-}\rangle: E(\bw) \times E(\bw^*) \to \C$ defined as follows: for $x = (x_i)_{1 \le i \le n-1} \in E(\bw)$ and $y = (y_i)_{1\le i \le n-1} \in E(\bw^*)$, we put
\[
\langle x, y \rangle = \sum_{i=1}^{n-1} \tr(x_i y_{n-i}).
\]
This pairing is $G(\bw)$-equivariant and nondegenerate, so it identifies $E(\bw^*)$ with the dual vector space to $E(\bw)$ (as a $G(\bw)$-representation).  Following~\cite[\S3.7]{ks}, one can define the \emph{Fourier--Sato transform}, a certain functor $\Db_{G(\bw)}(E(\bw)) \to \Db_{G(\bw^*)}(E(\bw^*))$ denoted in~\cite{ks} by $\cF \mapsto \cF^\wedge$.  In this paper, it will be more convenient to use the functor
\[
\bT: \Db_{G(\bw)}(E(\bw)) \to \Db_{G(\bw^*)}(E(\bw^*))
\]
defined by $\bT(\cF) = (\cF^\wedge)[\dim E(\bw)]$.  With this additional shift, $\bT$ becomes $t$-exact for the perverse $t$-structure.  It is an equivalence of categories (because $G(\bw)$-equivariant sheaves are automatically conic in the sense of~\cite[\S3.7]{ks}), and it is ``almost'' an involution: its inverse
\[
\bT': \Db_{G(\bw^*)}(E(\bw^*)) \to \Db_{G(\bw)}(E(\bw))
\]
is given by
\[
\bT'(\cF) \cong s^*\bT(\cF),
\]
where $s: E(\bw) \to E(\bw)$ is the ``antipode map'' given by $s(x) = -x$.

\subsection{Simple perverse sheaves on \texorpdfstring{$E(\bw)$}{E(w)}}

The following fact is well known.  We include the proof since it is so short.

\begin{lemma}\label{lem:stab-conn}
For any point $x \in E(\bw)$, the stabilizer of $x$ in $G(\bw)$ is connected.
\end{lemma}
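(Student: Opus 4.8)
The plan is to identify the stabilizer $G(\bw)^x$ of a point $x \in E(\bw)$ with the automorphism group $\operatorname{Aut}_{\Rep(Q_n)}(M(x))$ of the corresponding quiver representation, and then show that this automorphism group is a connected algebraic group. The first identification is essentially a tautology: an element $g = (g_1, \ldots, g_n) \in G(\bw)$ fixes $x$ if and only if $g_{i+1} x_i g_i^{-1} = x_i$ for all $i$, which is precisely the condition that $g$ be an automorphism of the representation $M(x)$. So $G(\bw)^x \cong \operatorname{Aut}(M(x))$ as algebraic groups, and it suffices to prove the latter is connected.

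Next I would use the decomposition of $M(x)$ into indecomposables. By Gabriel's theorem, $M(x) \cong \bigoplus_{1 \le i \le j \le n} R_{ij}^{\oplus b_{ij}}$ for suitable multiplicities $b_{ij}$. The endomorphism algebra $\operatorname{End}(M(x))$ is then a finite-dimensional associative $\C$-algebra, and $\operatorname{Aut}(M(x))$ is exactly its group of units, i.e. an open subvariety of the affine space $\operatorname{End}(M(x))$ cut out by the non-vanishing of a determinant. The key structural point is that $\operatorname{End}(M(x))$ is a \emph{connected} algebra in the sense that its semisimple quotient is a product of matrix algebras over $\C$: indeed $\operatorname{End}(R_{ij}) \cong \C$ for each indecomposable (since $R_{ij}$ is a "thin" representation), so by Fitting's lemma the radical $J$ of $\operatorname{End}(M(x))$ consists of the non-isomorphisms, and $\operatorname{End}(M(x))/J \cong \prod_{ij} \operatorname{Mat}_{b_{ij}}(\C)$.

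Given this, connectedness follows from a standard fact: the unit group of a finite-dimensional $\C$-algebra $A$ with $A/J(A) \cong \prod_r \operatorname{Mat}_{n_r}(\C)$ is connected. Concretely, $A^\times$ fibers over $(A/J)^\times \cong \prod_r \GL_{n_r}(\C)$ with fiber a copy of the additive group $1 + J$ (which is unipotent, hence isomorphic as a variety to affine space $J$); since the base $\prod_r \GL_{n_r}(\C)$ is connected and the fiber is connected, $A^\times$ is connected. I expect this last step to be the one requiring the most care to state cleanly, since one must make the bundle structure $A^\times \to (A/J)^\times$ precise (it is a Zariski-locally trivial fibration because one can lift idempotents / units through the nilpotent ideal $J$), but it is entirely routine representation-theory-of-algebras material and the paper can reasonably cite it or dispatch it in a line. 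An alternative, perhaps slicker, route that avoids even this: write $g \in \operatorname{Aut}(M(x))$ and connect it to $\id$ by the path $t \mapsto tg + (1-t)\,\mathrm{ss}(g)$ where $\mathrm{ss}(g)$ is the semisimple part — but making that honest again reduces to the same $A/J$ analysis, so I would just present the fibration argument.
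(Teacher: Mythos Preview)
Your proof is correct, and its core idea coincides with the paper's: the stabilizer of $x$ is the unit group of the endomorphism algebra $\operatorname{End}(M(x))$, hence a nonempty Zariski-open subset of a finite-dimensional complex vector space. The paper simply stops there: a nonempty Zariski-open subset of an irreducible variety (here, affine space) is itself irreducible, hence connected. Your detour through the Jacobson radical, the semisimple quotient $A/J \cong \prod_r \mathrm{Mat}_{n_r}(\C)$, and the fibration $A^\times \to (A/J)^\times$ is correct but unnecessary; you already had the one-line conclusion available the moment you wrote ``an open subvariety of the affine space $\operatorname{End}(M(x))$ cut out by the non-vanishing of a determinant.'' The structural analysis you outline would be needed over a non-algebraically-closed field, but over $\C$ the irreducibility argument suffices.
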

\begin{proof}
Let $\mathfrak{g}(\bw) = \mathfrak{gl}(w_1) \times \cdots \times \mathfrak{gl}(w_n)$, and let this Lie algebra act on $E(\bw)$ by
\[
(g_1,\ldots,g_n) \cdot (x_1,\ldots,x_{n-1}) = (g_2x_1, g_3x_2, \ldots, g_nx_{n-1}) - (x_1g_1, x_2g_2, \ldots , x_{n-1}g_{n-1}).
\]
Let $\mathfrak{z}$ be the stabilizer in $\mathfrak{g}(\bw)$ of a point $x \in E(\bw)$.  Then $\mathfrak{z}$ is a vector space.  The stabilizer in $G(\bw)$ of $x$ is the Zariski open subset of $\mathfrak{z}$ consisting of elements with nonzero determinant, so it is connected.
\end{proof}

As a consequence, the only $G(\bw)$-equivariant irreducible local system on any $G(\bw)$-orbit is the trivial local system.  Every simple $G(\bw)$-equivariant perverse sheaf is therefore of the form $\IC(\cO_Y)$ for some $Y \in \bP(\bw)$.  Given such a perverse sheaf, its Fourier--Sato transform $\bT(\IC(\cO_Y))$ is a $G(\bw)$-equivariant simple perverse sheaf on $E(\bw^*)$, so it must be isomorphic to $\IC(\cO_{Y'})$ for some $Y' \in \bP(\bw)$.  We thus obtain a map
\[
\bT: \bP(\bw) \to \bP(\bw^*)
\]
characterized by the property that $\bT(\IC(\cO_Y)) \cong \IC(\cO_{\bT(Y)})$.  This is a bijection.  Note that the antipode map $s: E(\bw) \to E(\bw)$ preserves every $G(\bw)$-orbit.  It follows that at the combinatorial level, $\bT$ is an involution:
\begin{equation}\label{eqn:bt-inv}
\bT(\bT(Y)) = Y \qquad\text{for all $Y \in \bP(\bw)$.}
\end{equation}

\begin{lemma}\label{lem:ses-raise}
Suppose we have a short exact sequence of representations $0 \to M(x) \to M(x') \to M(\be_i) \to 0$.  Let $v$ be a vector in $M(x')$ whose image in $M(\be_i)$ is nonzero, and let $k$ be the smallest integer such that $(x')^k(v) = 0$.  If $x \in \cO_Y$ and $x' \in \cO_{Y'}$, then
\[
Y' = \Raise(Y,i,j)
\qquad\text{for some $j$ such that $1 \le j \le\cK_i(Y,k)$.}
\]
\end{lemma}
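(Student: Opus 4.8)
The plan is to read off the structure of $M(x)$, $M(x')$, and $M(\be_i)$ directly in terms of Jordan bases, and to see how the triangular array changes when we ``insert'' the one-dimensional module $M(\be_i) = R_{ii}$ into $M(x)$. First I would fix a Jordan basis $\{u_{pq}^{(\ell)}\}$ for $M(x')$ of type $Y'$, so that $x'$ acts by~\eqref{eqn:model-repn}. The submodule $M(x) \subset M(x')$ has codimension one and is supported by the requirement that the quotient is $\be_i$, i.e.\ the quotient is concentrated at vertex $i$. I would choose the vector $v$ as in the statement; since its image in $M(\be_i)$ is nonzero, $v$ lies at vertex $i$, and $k$ is the smallest integer with $(x')^k v = 0$, equivalently $v \in V'_{i,k} \setminus V'_{i,k-1}$ in the kernel-flag notation, where $V'_{pq} = \ker (x')^{q}$ restricted appropriately as in~\eqref{eqn:kernel-example}. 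Since $M(x)$ is the kernel of the projection $M(x') \to M(\be_i)$ sending $v \mapsto$ (a generator) and all other relevant basis vectors to $0$, one sees that $M(x)$ is obtained from $M(x')$ by, roughly, ``removing one box'' from the chain through $v$: concretely $M(x') \cong M(x) \oplus R_{ii}$ is false in general, but there is a short exact sequence, and comparing composition series of the $R_{pq}$'s shows that $\udim M(x) = \udim M(x') - \be_i$, i.e.\ $Y' = \Raise(Y, i, j)$ for \emph{some} $j$ with $1 \le j \le n-i+1$ (this is already forced by Corollary~\ref{cor:tE-dim}-type bookkeeping plus the fact that the only difference in dimension vectors is one copy of $\be_i$ — but we must identify $j$ more precisely).

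The main content is pinning down the bound $j \le \cK_i(Y,k)$. I would argue as follows. The submodule $M(x)$ sits inside $M(x')$, and by restriction the kernel flag of $M(x')$ restricts to something comparable to the kernel flag of $M(x)$; in the chute $i$, the column-$q$ kernel dimension $\dim \ker x'_{i+q-1}\cdots x'_i = y'_{i1} + \cdots + y'_{iq}$ exceeds the corresponding quantity for $x$ by exactly $1$ once $q$ is large enough to ``capture'' the class of $v$, and agrees for small $q$. The threshold where this jump happens is controlled by $k$ (the depth of $v$ in the kernel filtration at vertex $i$): the jump cannot occur before column $k$ because $v \notin V'_{i,k-1}$ forces the short exact sequence to leave the first $k-1$ columns of chute $i$ untouched in the relevant sense. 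This tells us $j \ge$ something, but we want $j \le \cK_i(Y,k)$. For that I would use the well-definedness constraint: $\Raise(Y,i,j)$ only makes sense when $j = 1$ or $y_{ij} < y_{i+1,j-1}$, and $\cK_i(Y,k)$ is by definition the \emph{largest} such $j$ that is $\le k$. So the real claim is the pair of assertions (a) $j \le k$, and (b) $j$ is a legal raise position (which is automatic since $\Raise(Y,i,j) = Y'$ is already a triangular array). Assertion (a) — that the box we add to chute $i$ lies in a column no later than $k$ — is the crux, and it comes from the fact that $x'$ kills $v$ after exactly $k$ steps, so $v$ contributes to $\ker (x')^k$ at vertex $i$ but to no earlier kernel, forcing the new basis vector in the Jordan basis for $Y'$ to sit in a column $\le k$ of chute $i$.

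The step I expect to be the main obstacle is the careful comparison of Jordan bases for $M(x)$ and $M(x')$ under the codimension-one inclusion: one cannot simply truncate a Jordan basis of $M(x')$ and get a Jordan basis of $M(x)$, because the submodule may cut across several Jordan strings. I would handle this by instead choosing a Jordan basis for $M(x)$ first (of type $Y$), then extending $v$ and its $x'$-orbit to realize $M(x')$, and checking directly that the resulting array is $\Raise(Y,i,j)$ with $j = $ (the column at vertex $i$ where the string through $v$ ``attaches'' to the existing structure of $M(x)$) — and that this $j$ satisfies $j \le k$ because the string $v, x'v, \ldots, (x')^{k-1}v$ has length $k$ and lands inside $M(x)$ after one step, pinning the attachment column. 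Chasing~\eqref{eqn:model-repn} through this extension, together with the definitions of $\cK_i$ and $\Raise$, should close the argument; the bookkeeping is routine once the right basis is in hand.
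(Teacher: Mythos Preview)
Your final execution plan --- fix a Jordan basis for $M(x)$ of type $Y$ and extend by adjoining $v$ and its $x'$-images --- is exactly the paper's approach. The paper carries this out by first replacing $v$ with $v' = v - (\text{certain terms in } M(x)_i)$ so that $x'(v')$ lands in a controlled span of Jordan basis vectors at vertex $i+1$; the largest column index appearing in that span is the $j$ of the lemma, and the bound $j \le \cK_i(Y,k)$ drops out of the definition of $\cK_i$ once one observes that only basis vectors $u_{i+1,q}^{(r)}$ with $q \le k-1$ can occur in $x'(v)$. You anticipate this modification as ``routine bookkeeping,'' which is fair, though the subtraction step is where the actual bound on $j$ appears.

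There is one genuine gap earlier: in your first paragraph you claim that $\udim Y' = \udim Y + \be_i$ already forces $Y' = \Raise(Y,i,j)$ for some $j$. That is false --- two triangular arrays whose dimension vectors differ by $\be_i$ can differ in several entries. What actually pins down $Y'$ is the kernel-dimension argument you begin in the second paragraph, pushed a bit further: for every chute $i' \ne i$ and every $q$, the space $M(x')_{i'}$ coincides with $M(x)_{i'}$ and the composite $x'_{i'+q-1}\cdots x'_{i'}$ agrees with $x_{i'+q-1}\cdots x_{i'}$ on it (even when the composite passes through vertex $i$, since the image of $x'_{i-1}$ already lies in $M(x)_i$), so by~\eqref{eqn:kernel-example} all entries outside chute $i$ are unchanged. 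Inside chute $i$, the partial sums $\sum_{p\le q} y'_{ip}$ exceed $\sum_{p\le q} y_{ip}$ by $0$ or $1$, monotone in $q$, so there is a single jump column $j$; since $v \in \ker(x')^k$ and $v \notin M(x)_i$, the jump has occurred by column $k$, i.e.\ $j \le k$. Together with the automatic well-definedness of $\Raise(Y,i,j) = Y'$ this yields $j \le \cK_i(Y,k)$, and you have a complete alternative proof that avoids the explicit basis extension. Note, however, that the paper's constructive proof also yields Remark~\ref{rmk:ses-raise} (a Jordan basis for $M(x)$ that extends to one for $M(x')$), which is invoked later in the proof of Theorem~\ref{thm:main}; the kernel-dimension route does not give this.
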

\begin{proof}
Choose a Jordan basis $\{ u_{pq}^{(r)}\}$ for $M(x)$.  Write $x'(v)$ in this basis:
\[
x'(v) = \sum_{\substack{1 \le q \le k-1 \\ 1 \le r \le y_{i+1,q}}} c_q^{(r)} u_{i+1,q}^{(r)}.
\]
Here, we can take the sum just over $1 \le q \le k-1$ instead of $1 \le q \le n-i$ because if some basis element $u_{i+1,q}^{(r)}$ with $q \ge k$ occurred with nonzero coefficient in the expansion of $x'(v)$, it would follow that $(x')^q(v) = x^{q-1}(x(v)) \ne 0$, a contradiction.

Next, we break up this sum according to whether $r \le y_{i,q+1}$ or $r > y_{i,q+1}$:
\[
x'(v) =
\sum_{\substack{1 \le q \le k-1 \\ 1 \le r \le y_{i,q+1}}}  c_q^{(r)} u_{i+1,q}^{(r)}
+
\sum_{\substack{1 \le q \le \cK_i(Y,k)-1 \\ y_{i,q+1} < r \le y_{i+1,q}}}  c_p^{(r)} u_{i+1,q}^{(r)}.
\]
Here, the second sum is just over $1 \le q \le \cK_i(Y,k)-1$ rather than $1 \le q \le k-1$ because if $\cK_i(Y,k) < q+1 \le k$, then, by the definition of $\cK_i(Y,k)$, we must have $y_{i,q+1} = y_{i+1,q}$.

Consider the vector
\[
v' = v - \sum_{\substack{1 \le q \le k-1 \\ 1 \le r \le y_{i,q+1}}}  c_q^{(r)} u_{i,q+1}^{(r)}.
\]
This vector, like $v$, has nonzero image in $M(\be_i)$.  Moreover, we have
\[
x'(v') = \sum_{\substack{1 \le q \le \cK_i(Y,k)-1 \\ y_{i,q+1} < r \le y_{i+1,q}}}  c_q^{(r)} u_{i+1,q}^{(r)}.
\]
Let $j$ be the largest integer such that some coefficient $c_{j-1}^{(r)}$ with $y_{ij} < r \le y_{i+1,j-1}$ is nonzero.  By relabeling the elements of our Jordan basis, we may assume, in particular, that $c_{j-1}^{(r)} \ne 0$ for $r = y_{ij}+1$.  Then the vectors
\begin{equation}\label{eqn:new-jordan}
x'(v'), (x')^2(v'), \ldots, (x')^{j-1}(v')
\end{equation}
are all nonzero, but $(x')^j(v') = 0$. We can now equip $M(x')$ with a basis as follows: starting from the original Jordan basis $\{u_{pq}^{(r)}\}$, delete the vectors
\[
u_{i+1,j-1}^{(y_{ij}+1)}, u_{i+2,j-2}^{(y_{ij}+1)}, \ldots, u_{i+j-1,1}^{(y_{ij}+1)},
\]
and then add the vectors in~\eqref{eqn:new-jordan}.  More concisely, we are considering the basis
\[
\{ u_{pq}^{(r)} \mid \text{if $p+q = i+j$, then $r \ne y_{ij}+1$} \} 
\cup \{ x'(v'), (x')^2(v'), \ldots, (x')^{j-1}(v') \}
\]
It is straightforward to see that this is a Jordan basis of type $\Raise(Y,i,j)$, as desired.
\end{proof}

\begin{remark}\label{rmk:ses-raise}
In the proof of Lemma~\ref{lem:ses-raise}, we constructed a Jordan basis for $M(x')$, a subset of which constitutes a Jordan basis for $M(x)$.  In other words, $M(x)$ admits a Jordan basis that extends to a Jordan basis for $M(x')$.
\end{remark}

\begin{theorem}
\label{thm:main}
For any $Y \in \bP(\bw)$, we have $\bT(Y) \cong \sT(Y)$.
\end{theorem}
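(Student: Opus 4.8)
The plan is to prove $\bT(Y)\cong\sT(Y)$ by induction on $n$, by showing that the Fourier--Sato transform obeys the recursion that defines $\sT$ in Section~\ref{ss:cft}. The base case $n=1$ is trivial, since $E(\bw)$ is then a point. For the inductive step, recall that $\sT(Y)=\sA_n^{y_{1,n}}\sA_{n-1}^{y_{2,n-1}-y_{1,n}}\cdots\sA_1^{y_{n,1}-y_{n-1,2}}\bigl(\sT(\Dell(Y))\cupc(0,\dots,0)\bigr)$. Since $\bT$ is an involution on triangular arrays by \eqref{eqn:bt-inv} and $\sT,\sICFT$ are mutually inverse bijections by Theorem~\ref{thm:cftbij}, it suffices to verify this recursion for $\bT$ in whichever direction is more convenient. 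I would split the verification into two pieces: (a) $\bT$ is compatible with the reduction $Y\rightsquigarrow\Dell(Y)\cupc(0,\dots,0)$; and (b) $\bT$ is compatible with each operation $\sA_i$.

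For (a): the last ladder of $Y$ records the multiplicities of the projective indecomposables $R_{in}$ as direct summands of $M(Y)$ --- concretely $\sum_{h\le i}b_{hn}=y_{i,n-i+1}$, by the telescoping used in the proof of Lemma~\ref{lem:pb-bij} --- so $M(Y)=M(\Dell(Y))\oplus\bigoplus_i b_{in}R_{in}$, with $M(\Dell(Y))$ supported on vertices $1,\dots,n-1$. I would aim to relate $\bT$ on $E(\bw)$ to $\bT$ on the smaller space $E(\udim\Dell(Y))$ by comparing the relevant orbit closures (equivalently, the resolutions $\tE$ of Section~\ref{sec:dimension}) through affine bundles, together with the standard compatibility of the Fourier--Sato transform with smooth linear maps \cite[\S3.7]{ks}; the combinatorial move $\cupc(0,\dots,0)$ is precisely the corresponding re-embedding. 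Because $\bT$ is \emph{not} additive, the summands $\bigoplus_i b_{in}R_{in}$ are not simply re-adjoined on the $\bw^*$ side: instead they are reintroduced --- as generic extensions by the injectives $R_{1,n-i+1}$ --- by the subsequent string of $\sA_i$'s, which is the content of (b).

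For (b): here the essential inputs are Lemma~\ref{lem:ses-raise} and Remark~\ref{rmk:ses-raise}. By \eqref{eqn:dimtau} the operation $\sA_i$ adds $\gamma_{1i}=\be_1+\cdots+\be_i=\udim R_{1i}$, performing one $\Raise$ on each of the chutes $i,i-1,\dots,1$; Lemma~\ref{lem:ses-raise} interprets each such $\Raise$ as the formation of an extension by a simple module $M(\be_j)$, and the maximality built into $\sa$ --- it always picks the column $\cK_i(Y,k)$, the largest one permitted by Lemma~\ref{lem:ses-raise} --- shows that $\sA_i$ realizes the \emph{generic} extension of $M(Y)$ by the indecomposable $R_{1i}$. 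To transport this through $\bT$, I would present the orbit closures in question through incidence varieties of the type $\tE$ (or the evident Hecke-type correspondences attached to an extension by $R_{1i}$), Fourier-transform the linear fibers and the proper projections, and identify the result with the corresponding correspondence for $\bw^*$; term by term, each application of $\sa$ should match one step of the recursion for $\bP(\bw^*)$.

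I expect the main obstacle to be extracting the Fourier--Sato transform of $\IC(\overline{\cO_Y})$ \emph{exactly}: not merely its support, but the fact that the transform is again the $\IC$ sheaf of a single orbit, and that this orbit is governed by the precise $\cK$-indices. Because $\pi_Y\colon\tE_Y\to E(\bw)$ is proper and birational onto $\overline{\cO_Y}$, the sheaf $\IC(\overline{\cO_Y})$ is the unique summand of $(\pi_Y)_!\,\underline{\C}[\dim\tE_Y]$ supported on all of $\overline{\cO_Y}$; transporting this through $\bT$ (using that the Fourier transform of the constant sheaf on the linear fiber $E(\bw)^V$ is the constant sheaf on its annihilator) yields a complex on $E(\bw^*)$ whose top constituent must be $\IC(\overline{\cO_{\bT(Y)}})$, and the real work is to check that this constituent is $\IC(\overline{\cO_{\sT(Y)}})$. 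The fiberwise annihilator bundle is not literally one of the $\tE_{Y'}$ --- its flag conditions have the ``wrong'' lengths --- so the identification rests on a dimension count, for which Theorem~\ref{thm:dim} and the explicit orbit models of Section~\ref{ss:orbits} are the essential tools, together with index bookkeeping precisely at the place where $\cK_i$ and $\sa$ were engineered. Once the recursion is verified in one direction, the other follows formally from Theorem~\ref{thm:cftbij} and \eqref{eqn:bt-inv}.
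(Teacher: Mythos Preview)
Your outline identifies the right inductive shape, and your reading of $\sA_i$ as a generic extension by $R_{1i}$ (via Lemma~\ref{lem:ses-raise}) is on target. But the mechanism you propose for actually pinning down $\bT(Y)$ has a real gap, and the paper closes it by a rather different route.

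The paper never Fourier-transforms the resolution $\tE_Y$ or works with annihilator bundles. Instead it invokes two external results: Evens--Mirkovi\'c~\eqref{eqn:em}, which identifies $\bT$ with the Pyasetski\u\i\ conormal bijection $Z$, and Zelevinsky's Proposition~\ref{prop:zelevinsky}, which characterises $Z(\cO_Y)$ as the unique orbit meeting the ``commuting space'' $C(x)\subset E(\bw^*)$ in a dense open set. This converts the sheaf-theoretic problem into a concrete one about representations: given $x\in\cO_Y$, describe the generic $\bar x\in C(x)$. The induction is then on $n$ \emph{and} on $w_n$ (not just on $n$): one peels a single indecomposable $R_{i_0 n}$ off $M(x)$ to obtain $x'\in\cO_{Y'}$ with $\sT(Y)=\sA_{n+1-i_0}\sT(Y')$, checks that restriction $C(x)\to C(x')$ is surjective, and the inductive hypothesis then controls the generic element of $C(x')$.

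The second idea you are missing is that the paper does \emph{not} prove $\bT(Y)=\sT(Y)$ at each inductive step. It only proves the inequality $\bT(Y)\le\sT(Y)$: filtering $\bar x$ by a chain of short exact sequences with simple quotients and applying Lemma~\ref{lem:ses-raise} repeatedly shows that any $\bar x\in C(x)$ lying over a generic point of $C(x')$ has orbit label $\le\sT(Y)$, hence so does the generic one. Since $\bT$ and $\sT$ are both bijections $\bP(\bw)\to\bP(\bw^*)$, a global inequality forces global equality. Your plan to extract the ``top constituent'' of the Fourier transform of $(\pi_Y)_!\underline{\C}$ and match it \emph{exactly} to $\IC(\overline{\cO_{\sT(Y)}})$ would need some argument of this kind anyway, and the annihilator-bundle route you sketch does not supply one: as you already observe, the annihilator of $E(\bw)^V$ is not an $\tE_{Y'}$, and a dimension count alone cannot distinguish $\sT(Y)$ from other orbits of the same dimension. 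Likewise your step~(a), relating $\bT$ on $E(\bw)$ to $\bT$ on $E(\udim\Dell(Y))$ ``through affine bundles,'' is not a single smooth linear map compatible with Fourier transform---different orbits carry different projective parts---so that reduction is not available in the form you state. The commuting-space description and the inequality-plus-bijection trick are the missing ingredients.
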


To prove this theorem, we need to recall some general results about algebraic group actions on vector spaces.  Let $H$ be a complex algebraic group acting on a vector space $V$ with finitely many orbits, and let $\cO \subset V$ be an $H$-orbit.  Then one can consider its conormal bundle $N^*\cO \subset V \times V^*$.  According to~\cite{pya:llgf}, there is a natural bijection
\[
Z: \{ \text{$H$-orbits in $V$} \} \to \{ \text{$H$-orbits in $V^*$} \}
\]
determined by the condition that $\overline{N^*(Z(\cO))} = \overline{N^*\cO} \subset V \times V^*$.  Next, according to~\cite[Proposition~7.2]{em:ftimi}, this bijection coincides with the one induced by Fourier transform:
\begin{equation}\label{eqn:em}
\bT(\IC(\cO)) \cong \IC(Z(\cO)).
\end{equation}
On the other hand, for quiver representations, there is another description of the bijection $Z$, due to Zelevinsky.  Consider a pair of quiver representations $(x,y) \in E(\bw) \times E(\bw^*)$.  We can draw this pair as
\[
\begin{tikzcd}
\C^{w_1} \ar[r, shift left, "x_1"] &
\C^{w_2} \ar[r, shift left, "x_2"] \ar[l, shift left, "y_{n-1}"] &
\cdots \ar[r, shift left, "x_{n-1}"] \ar[l, shift left, "y_{n-2}"] &
\C^{w_n} \ar[l, shift left, "y_1"]
\end{tikzcd}
\]
We say that $x$ and $y$ \emph{commute} if
\[
x_i y_{n-i} + y_{n-i-1}x_{i+1} = 0
\qquad\text{for $i = 0, 1, \ldots, n-1$.}
\]
(To make sense of this equation for $i = 0$ or $i = n-1$, we adopt the convention that $x_0 y_n = 0$ and $y_0 x_n = 0$.)  For any $x \in E(\bw)$, let
\[
C(x) = \{ y \in E(\bw^*) \mid \text{$x$ and $y$ commute} \}.
\]
Of course, $C(x)$ is a linear subspace of $E(\bw^*)$.  There is a unique orbit $\cO \subset E(\bw^*)$ such that $\cO \cap C(x)$ is dense in $C(x)$.

\begin{proposition}[Zelevinsky]\label{prop:zelevinsky}
Let $Y \in \bP(\bw)$ and $Y' \in \bP(\bw^*)$.  Then $Z(\cO_Y) = \cO_{Y'}$ if and only if for any point $x \in \cO_Y$, the set $\cO_{Y'} \cap C(x)$ is an open dense subset of $C(x)$ in the Zariski topology.
\end{proposition}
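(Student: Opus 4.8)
The plan is to recognize the statement as Pyasetskii's characterization of the bijection $Z$ (from~\cite{pya:llgf}) combined with the elementary fact that $C(x)$ is a conormal space; almost nothing quiver-specific enters beyond Gabriel's theorem and the description of the infinitesimal action in the proof of Lemma~\ref{lem:stab-conn}. I would first compute the conormal space $N^*_x\cO_Y := (T_x\cO_Y)^{\perp}\subseteq E(\bw^*)$, where $T_x\cO_Y$ is the image of the map $\mathfrak g(\bw)\to E(\bw)$, $g\mapsto g\cdot x$. Expanding $\langle g\cdot x, y\rangle$ with the explicit formulas for the pairing and for the infinitesimal action, regrouping the resulting sum by the components $g_j$, and using nondegeneracy of the trace form on each $\mathfrak{gl}(w_j)$, one sees that $N^*_x\cO_Y$ is cut out by the equations $x_{j-1}y_{n-j+1}=y_{n-j}x_j$ (with the boundary conventions $x_0y_n=0$, $y_0x_n=0$). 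Comparing with the definition of ``commute'', one finds $C(x)=N^*_x\cO_Y$ up to the automorphism $(y_i)_i\mapsto((-1)^iy_i)_i$ of $E(\bw^*)$; since rescaling the arrows of an equioriented type~$A$ representation by nonzero scalars does not change its isomorphism class, this automorphism fixes every $G(\bw)$-orbit, so it is harmless and I will simply identify $C(x)$ with the conormal space. Because the commuting relation is symmetric in $x$ and $y$, the analogous computation identifies, for $y\in E(\bw^*)$, the commuting space $C'(y):=\{x\in E(\bw)\mid x,y\text{ commute}\}$ with the conormal space to the $G(\bw^*)$-orbit through $y$ (under $E(\bw)\cong E(\bw^*)^{*}$); in particular $\dim C(x)=\dim E(\bw)-\dim\cO_Y$.

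Next, the total space $N^*\cO_Y:=\bigcup_{x\in\cO_Y}\{x\}\times C(x)$ is the total space of a $G(\bw)$-equivariant vector bundle over the irreducible variety $\cO_Y$, so it is irreducible and locally closed in $E(\bw)\times E(\bw^*)$ of dimension $\dim E(\bw)$, and its closure $\overline{N^*\cO_Y}$ (taken in $E(\bw)\times E(\bw^*)$) is the conormal variety of~\cite{pya:llgf}. By definition $Z(\cO_Y)=\cO_{Y'}$ iff $\overline{N^*\cO_Y}=\overline{N^*\cO_{Y'}}$ inside $E(\bw)\times E(\bw^*)$, the conormal variety of $\cO_{Y'}\subseteq E(\bw^*)$ being transported via the swap $E(\bw^*)\times E(\bw)\cong E(\bw)\times E(\bw^*)$. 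Fix $x_0\in\cO_Y$. Since $E(\bw^*)$ has only finitely many orbits and $C(x_0)$ is irreducible (a linear subspace), there is a unique orbit $\cO_{Y'}$ with $\cO_{Y'}\cap C(x_0)$ dense in $C(x_0)$; as $C(g\cdot x_0)=g\cdot C(x_0)$ while $G(\bw)$ fixes each orbit setwise, this $\cO_{Y'}$ does not depend on $x_0\in\cO_Y$, and $\cO_{Y'}\cap C(x)$ is dense in $C(x)$ for every $x\in\cO_Y$.

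I claim $Z(\cO_Y)=\cO_{Y'}$. Set $\mathcal U:=N^*\cO_Y\cap(\cO_Y\times\cO_{Y'})$, a constructible subset of $N^*\cO_Y$ meeting each fibre $\{x\}\times C(x)$ in the dense subset $\{x\}\times(\cO_{Y'}\cap C(x))$; a fibrewise density argument then gives $\overline{\mathcal U}=\overline{N^*\cO_Y}$ and $\dim\mathcal U=\dim E(\bw)$. On the other hand, for $(x,y)\in\mathcal U$ one has $y\in\cO_{Y'}$ and $x\in C'(y)=N^*_y\cO_{Y'}$, so $\mathcal U$ lies inside the conormal variety $N^*\cO_{Y'}$ (as a subset of $E(\bw)\times E(\bw^*)$ after the swap), which is irreducible of dimension $\dim E(\bw)$; an irreducible constructible subset of full dimension in an irreducible variety is dense, so $\overline{\mathcal U}=\overline{N^*\cO_{Y'}}$ as well. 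Hence $\overline{N^*\cO_Y}=\overline{N^*\cO_{Y'}}$, i.e.\ $Z(\cO_Y)=\cO_{Y'}$, proving the claim.

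Finally I would assemble the equivalence. If $\cO_{Y'}\cap C(x)$ is dense in $C(x)$ then $\cO_{Y'}$ is the distinguished orbit above, so $Z(\cO_Y)=\cO_{Y'}$ by the claim; and such a dense orbit is automatically open in $C(x)$, since its complement is a finite union of sets $\overline{\cO_{Y''}}\cap C(x)$, hence closed. Conversely, if $Z(\cO_Y)=\cO_{Y'}$, let $\cO_{Y''}$ be the distinguished orbit; the claim gives $Z(\cO_Y)=\cO_{Y''}$, and injectivity of $Z$ forces $\cO_{Y'}=\cO_{Y''}$, whence $\cO_{Y'}\cap C(x)$ is open and dense in $C(x)$. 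The step I expect to be the main obstacle is the first one: fixing conventions so that $C(x)$ really is $N^*_x\cO_Y$, and checking that the residual rescaling automorphism preserves orbits; after that, the care needed in the third paragraph --- where $\mathcal U$ is merely constructible and the conclusion rests on a dimension count rather than an explicit description of the orbits --- is the other delicate point.
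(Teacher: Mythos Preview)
Your proposal is correct, and in fact goes well beyond what the paper does: the paper gives no proof at all for this proposition, but simply refers the reader to~\cite[Proposition~4.4]{zel:paakl}. Your argument is a self-contained reconstruction of the standard proof---identify $C(x)$ with the conormal space $N^*_x\cO_Y$ (up to the harmless sign automorphism $(y_i)\mapsto((-1)^i y_i)$, which indeed preserves every orbit since any nonzero rescaling of arrows in an equioriented $A_n$ representation is absorbed by the $G(\bw)$-action), and then use Pyasetskii's characterization together with a dimension count on the set $\mathcal U$.

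Two small points where your write-up could be tightened:
\begin{itemize}
\item The ``fibrewise density argument'' is cleanest if you use the $G(\bw)$-equivariant local trivialization of the conormal bundle: over a local section of $G(\bw)\to\cO_Y$, the set $\mathcal U$ becomes $U\times(\cO_{Y'}\cap C(x_0))$, visibly dense in $U\times C(x_0)$. This also shows $\mathcal U$ is locally closed (hence open and irreducible once dense), so the later appeal to ``irreducible constructible of full dimension'' goes through.
\item Your justification for openness of $\cO_{Y'}\cap C(x)$ is not quite right as stated: the complement is a union of $\cO_{Y''}\cap C(x)$, not of $\overline{\cO_{Y''}}\cap C(x)$, and those pieces are only locally closed. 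The correct one-liner is that density gives $C(x)\subseteq\overline{\cO_{Y'}}$, and since $\cO_{Y'}$ is open in $\overline{\cO_{Y'}}$, the intersection $\cO_{Y'}\cap C(x)$ is open in $C(x)$.
\end{itemize}
These are cosmetic; the strategy and all substantive steps are sound.
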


For a proof, see~\cite[Proposition~4.4]{zel:paakl}.

\begin{proof}[Proof of Theorem~\ref{thm:main}]
Suppose $\bw = (w_1,\ldots, w_n)$.  We will prove this latter statement by a double induction on $n$ and on $w_n$.

If $n = 1$, then $\bP(\bw)$ and $\bP(\bw^*)$ each consist of a single element, and the claim is obvious. Suppose from now on that $n > 1$.  If $w_n = 0$, then the space $E(\bw)$ can be identified with $E(w_1, \ldots, w_{n-1})$.  In this case, the theorem holds because it reduces to the claim for triangular arrays of size $n-1$.

Suppose now that $n > 1$ and $w_n  > 0$.  Given $Y \in \bP(\bw)$, let $i_0$ be the smallest integer such that $y_{i_0,n+1-i_0} \ne 0$.  (Since $w_n > 0$, some such $i_0$ exists.)  Define a triangular array $Y'$ by
\[
Y'_{ij} =
\begin{cases}
y_{ij} & \text{if $j < n+1-i$, or if $i < i_0$,} \\
y_{ij} - 1& \text{if $j = n+1-i$ and $i \ge i_0$.}
\end{cases}
\]
Thus, $Y'$ differs from $Y$ only in the last ladder, and we have
\[
y'_{i,n+1-i} - y'_{i-1,n+2-i} =
\begin{cases}
0 = y_{i,n+1-i} - y_{i-1,n+2-i} & \text{if $i < i_0$,} \\
y_{i,n+1-i} - y_{i-1,n+2-i} - 1 & \text{if $i = i_0$,} \\
y_{i,n+1-i} - y_{i-1,n+2-i} & \text{if $i > i_0$.}
\end{cases}
\]
From the formula in Section~\ref{ss:cft}, we see that
\[
\sT(Y) = \sA_{n+1-i_0} \sT(Y').
\]

Choose a point $x \in \cO_Y$, and choose a Jordan basis $\{u_{ij}^{(k)}\}$ for $M(x)$.  Let $\bw' = \udim(Y')$, and identify $\C^{\bw'}$ with the span of
\[
\{ u_{ij}^{(k)} \mid  \text{if $j = n+1-i$, then $j \ge 2$} \}.
\]
This subspace is clearly preserved by $x$.  Let $x' = x|_{\C^{\bw'}}$.  Then $x'$ is of type $Y'$, and the basis above is a Jordan basis for it.  By induction and~\eqref{eqn:em}, we have $\bT(Y') = Z(Y') = \sT(Y')$.  By Proposition~\ref{prop:zelevinsky}, we have
\begin{equation}\label{eqn:ind-dense} 
\cO_{\sT(Y')} \cap C(x') \ne \varnothing.
\end{equation}
The remainder of the proof is broken up into several steps.

\textit{Step 1. The map $p: C(x) \to C(x')$ given by restricting to $\C^{\bw'}$ is surjective.}  Given $\bar x' \in C(x')$, we must show how to extend it to a representation $\bar x$ on $\C^{\bw}$ that commutes with $x$.  To define $\bar x$, we must specify its values on basis vectors of the form $u_{i,n+1-i}^{(1)}$ with $i \ge i_0$.  Choose any vector $v$ in the span of $\{u_{i_0-1,j'}^{(k')} \mid j' \le n+1-i_0 \}$, and then set
\begin{equation}\label{eqn:xbar-defn}
\bar x(u_{i,n+1-i}^{(1)}) = x^{i-i_0} (v).
\end{equation}
We must show that $x$ and $\bar x$ commute, or in other words, that
\begin{equation}\label{eqn:commute}
x \bar x(u_{ij}^{(k)}) = \bar x x(u_{ij}^{(k)})
\end{equation}
for any basis vector $u_{ij}^{(k)}$. If $j \le n -i$, or if $j = n+1-i$ and $k \ge 2$, this holds because $x'$ and $\bar x'$ commute.  On the other hand, if $j = n+1-i$ and $k = 1$, then~\eqref{eqn:commute} follows easily from~\eqref{eqn:xbar-defn}. This completes the proof of Step~1.

For Steps~2--4 of the proof, we let $\bar x'$ be any element of $\cO_{\sT(Y')} \cap C(x')$ (such an element exists by~\eqref{eqn:ind-dense}), and let $\bar x$ be any element of $p^{-1}(\bar x)$ (such an element exists by Step~1).

\textit{Step 2. Notation related to procedure $\sA_{i_0}$.}
Let $\bw' = \udim(Y')$.  From the definition, we see that $\bw = \bw' + (\be_{i_0} + \be_{i_0+1} + \cdots + \be_n)$.  We now define a sequence of intermediate dimension vectors
\[
\bw' = \bw_{i_0-1}, \bw_{i_0}, \bw_{i_0+1}, \ldots, \bw_{n-1}, \bw_n = \bw
\]
by
\[
\bw_m = \bw' + (\be_{i_0} + \be_{i_0+1} + \cdots + \be_m) = \bw_{m-1} + \be_m
\]
Next, define a sequence of integers $q_{i_0-1}, q_{i_0}, \ldots, q_n$ and a sequence of triangular arrays $Z_{i_0-1}, Z_{i_0}, \ldots, Z_n$ with $Z_m \in \bP(\bw_m^*)$ as follows: we first set $q_{i_0-1} = i_0$, and $Z_{i_0-1} = \sT(Y') \in \bP((\bw')^*)$.  If $Z_{m-1}$ and $q_{m-1}$ are already defined, we set
\begin{align}
q_m &= \cK_{n+1-m}(Z_{m-1}, q_{m-1}), \label{eqn:qm-defn} \\
Z_m &= \Raise(Z_{m-1}, n+1-m, q_m) \in \bP(\bw_m^*).\label{eqn:Zm-defn}
\end{align}
This is just an unpacking of the definition of procedure $\sA_{n+1-i_0}$: it is easy to see from the definitions that
\[
\underbrace{\sa \circ \cdots \circ \sa}_{\text{$m+1-i_0$ times}}(\sT(Y'),n+1-i_0,i_0) = (Z_m,n-m,q_m).
\]
In particular, $Z_n = \sA_{n+1-i_0}(\sT(Y')) = \sT(Y)$.

\textit{Step 3. Notation related to the orbit of $\bar x$.}
Identify $\C^{\bw_m}$ with the span of
\[
\{ u_{ij}^{(k)} \mid \text{if $i > m$ and $j = n+1-i$, then $k \ge 2$} \}.
\]
Each of these spaces is preserved by $\bar x$.  They are not preserved by $x$ (except in the extreme cases $m = i_0 - 1$ or $m = n$).  Instead, in general, $x$ restricts to a map $\C^{\bw_m} \to \C^{\bw_{m+1}}$.

Let $\bar x'_m = \bar x|_{\C^{\bw_m}}$.  Then $\bar x'_m \in E(\bw_m^*)$, and the sequence
\[
\bar x' = \bar x'_{i_0-1}, \bar x'_{i_0}, \bar x'_{i_0+1}, \ldots, \bar x'_{n-1}, \bar x'_m = \bar x
\]
can be thought of as ``interpolating'' between $\bar x'$ and $\bar x$.  Note that for each $m \in \{i_0, \ldots, n\}$, there is a short exact sequence
\[
0 \to M(\bar x'_{m-1}) \to M(\bar x'_m) \to M(\be_{n+1-m}) \to 0.
\]
If $m \in \{i_0+1, \ldots, n\}$, then this can be enlarged to a commutative diagram
\begin{equation}\label{eqn:mxbar-commute}
\begin{tikzcd}
0 \ar[r] &
  M(\bar x'_{m-2}) \ar[d, "x"'] \ar[r] &
  M(\bar x'_{m-1}) \ar[d, "x"'] \ar[r] &
  M(\be_{n-m}) \ar[d, "\wr"', "x"] \ar[r] & 0 \\
0 \ar[r] &
  M(\bar x'_{m-1}) \ar[r] &
  M(\bar x'_m) \ar[r] &
  M(\be_{n+1-m}) \ar[r] & 0
\end{tikzcd}
\end{equation}  
Let $Z'_m \in \bP(\bw_m^*)$ be the label of the orbit of $\bar x'_m$.  By Lemma~\ref{lem:ses-raise}, there is an integer $q'_m$ such that
\begin{equation}\label{eqn:Zmp-defn}
Z'_m = \Raise(Z'_{m-1}, n+1-m, q'_m).
\end{equation}
Note that $Z'_n$ is the label of the orbit of $\bar x$.

\textit{Step 4. For $m \in \{i_0, i_0+1, \ldots, n\}$, we have $q'_m \le q_m$ and $Z'_m \le Z_m$.}  We proceed by induction on $m$.  For $m = i_0$, the integer $q_{i_0} = \cK_{n+1-i_0}(Z_{i_0-1}, i_0)$ is the largest integer $q$ such that $\Raise(Z_{i_0-1},n+1-i_0,q)$ is defined. Since $Z'_{i_0-1} = Z_{i_0-1}$, and since $\Raise(Z_{i_0-1},n+1-i_0,q'_{i_0})$ is also defined, we have
\[
q'_{i_0} \le q_{i_0}.
\]
The triangular arrays $Z'_{i_0}$ and $Z_{i_0}$ differ only in the $(n+1-i_0)$th chute.  It is clear from~\eqref{eqn:Zm-defn}, \eqref{eqn:Zmp-defn}, and the definition of the partial order that $Z'_{i_0} \le Z_{i_0}$.

Now suppose that $m > i_0$, and that $q'_{m-1} \le q_{m-1}$.  By Remark~\ref{rmk:ses-raise}, we may choose a Jordan basis for $M(\bar x'_{m-2})$ that extends to a Jordan basis for $M(\bar x'_{m-1})$.  The latter adds one extra basis element $u$ with the property that $(\bar x'_{m-1})^{q'_{m-1}}(u) = 0$.  The commutative diagram~\eqref{eqn:mxbar-commute} shows that $M(\bar x'_m)$ is spanned by $M(\bar x'_{m-1})$ and $x(u)$.  Let $k$ be the smallest integer such that $(\bar x'_m)^k (x(u)) = 0$.  As $(\bar x'_m)^{q'_{m-1}}(x(u)) = x (\bar x'_{m-1})^{q'_{m-1}}(u) = 0$, we clearly have $k \le q'_{m-1} \le q_{m-1}$.  By Lemma~\ref{lem:ses-raise} and the definition of $q'_m$, we have
\[
q'_m \le \cK_{n+1-m}(Z'_{m-1}, k) \le k \le q'_{m-1}.
\]
We will now show that $q'_m \le q_m$.  If $q_m > k$, the claim is obvious.  Suppose instead that $q_m \le k$. Then we can replace~\eqref{eqn:qm-defn} by
\[
q_m = \cK_{n+1-m}(Z_{m-1},k).
\]
Since $k \le q'_{m-1}\le q_{m-1}$ as well, the first $k-1$ entries of the $(n+2-m)$th chutes of $Z'_{m-1}$ and $Z_{m-1}$ agree (and coincide with the corresponding entries of $\sT(Y')$).  Of course, the $(n+1-m)$th chutes of $Z'_{m-1}$ and $Z_{m-1}$ also agree with the $(n+1-m)$th chute of $\sT(Y')$.  Since $Z'_{m-1}$ and $Z_{m-1}$ agree on all entries relevant to the computation of $\cK_{n+1-m}({-},k)$, we conclude that $\cK_{n+1-m}(Z'_{m-1}, k) = \cK_{n+1-m}(Z'_{m-1}, k)$, and hence that $q'_m \le q_m$, as desired.

It remains to show that $Z'_m \le Z_m$.  The triangular arrays $Z'_m$ and $Z_m$ differ from $Z'_{m-1}$ and $Z_m$, respectively, only in the $(n+1-m)$th chute.  Since $Z'_{m-1} \le Z_{m-1}$, in order to compare $Z'_m$ and $Z_m$, we need only compare their $(n+1-m)$th chutes.  It is clear from~\eqref{eqn:Zm-defn} and~\eqref{eqn:Zmp-defn} that
\[
\sum_{p = 1}^j (Z'_m)_{n+1-m,p} \ge \sum_{p=1}^j (Z_m)_{n+1-m,p}
\]
for all $j$ (indeed, they are equal unless $q'_m < j < q_m$, in which case the left-hand side is larger by $1$).  We conclude that $Z'_m \le Z_m$, as desired.

\textit{Step 5. Conclusion of the proof.}
By~\eqref{eqn:em} and Proposition~\ref{prop:zelevinsky}, $\cO_{\bT(Y)} \cap C(x)$ is a Zariski-open dense subset of $C(x)$.  The surjective linear map $p: C(x) \to C(x')$ is an open map, so $p(\cO_{\bT(Y)} \cap C(x))$ is a Zariski-open dense subset of $C(x')$.  The same holds for $\cO_{\sT(Y')} \cap C(x')$ (see the remarks preceding~\eqref{eqn:ind-dense}, so
\[
p(\cO_{\bT(Y)} \cap C(x)) \cap \cO_{\sT(Y')} \cap C(x') \ne \varnothing.
\]
Choose $\bar x'$ in this set, and then choose $\bar x \in \cO_{\bT(Y)} \cap C(x)$.  Apply Steps~2--4 to these elements.  From Steps~2 and~3, we have $Z_n = \sT(Y)$ and $Z'_n = \bT(Y)$.  Step~4 then tells us that
\[
\bT(Y) \le \sT(Y).
\]
This inequality holds for all $Y \in \bP(\bw)$.  But since $\bT$ and $\sT$ are both bijections, this inequality actually implies that $\bT(Y) = \sT(Y)$ for all $Y$.
\end{proof}

\begin{corollary}\label{cor:main}
For all $Y \in \bP(\bw)$, we have $\sT(Y) = \sICFT(Y)$.
\end{corollary}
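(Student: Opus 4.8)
The plan is to deduce the corollary purely formally from Theorem~\ref{thm:main}, the combinatorial involutivity~\eqref{eqn:bt-inv} of the Fourier--Sato map $\bT$, and Theorem~\ref{thm:cftbij}; no new combinatorics is needed. Fix $\bw \in \Z_{\ge 0}^n$. First I would record, as a consequence of Theorem~\ref{thm:main} applied both to $\bw$ and to $\bw^*$, that $\sT$ agrees with $\bT$ as a map $\bP(\bw) \to \bP(\bw^*)$ and also as a map $\bP(\bw^*) \to \bP(\bw)$. (Here the symmetry $(\bw^*)^* = \bw$ is used to interpret the second instance.)

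Next, combining this with~\eqref{eqn:bt-inv}, which asserts that $\bT \circ \bT$ is the identity, I would conclude that $\sT \circ \sT$ is the identity as well; that is, $\sT \colon \bP(\bw) \to \bP(\bw^*)$ is its own inverse. On the other hand, Theorem~\ref{thm:cftbij} tells us that $\sICFT \colon \bP(\bw^*) \to \bP(\bw)$ is the inverse of $\sT \colon \bP(\bw) \to \bP(\bw^*)$, and — reading the same theorem with the roles of $\bw$ and $\bw^*$ interchanged — that $\sICFT \colon \bP(\bw) \to \bP(\bw^*)$ is the inverse of $\sT \colon \bP(\bw^*) \to \bP(\bw)$. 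Since the latter map equals $\bT$, whose inverse is again $\bT = \sT \colon \bP(\bw) \to \bP(\bw^*)$, uniqueness of two-sided inverses forces $\sICFT = \sT$ as maps $\bP(\bw) \to \bP(\bw^*)$. This is precisely the claimed identity $\sICFT(Y) = \sT(Y)$ for all $Y \in \bP(\bw)$.

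I do not expect a real obstacle here: the argument is a short diagram chase resting entirely on Theorems~\ref{thm:cftbij} and~\ref{thm:main}. The one point that deserves care is bookkeeping — keeping straight which of $\bP(\bw)$ and $\bP(\bw^*)$ is the source and which the target for each occurrence of $\sT$, $\sICFT$, and $\bT$, since all three maps come in two versions (one in each direction) and the identification $(\bw^*)^* = \bw$ is invoked freely. It is worth noting, as remarked in the introduction and in Section~\ref{sec:operations}, that we know of no direct combinatorial verification that $\sT = \sICFT$; the equality is obtained only through the geometric input of Theorem~\ref{thm:main}.
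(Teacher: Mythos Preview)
Your proposal is correct and follows essentially the same route as the paper's proof: both argue that $\sICFT$ is the inverse of $\sT$ (Theorem~\ref{thm:cftbij}), that $\sT$ coincides with $\bT$ (Theorem~\ref{thm:main}), and that $\bT$ is an involution (\eqref{eqn:bt-inv}), whence $\sT$ is its own inverse and therefore equals $\sICFT$. Your version is simply more explicit about the source/target bookkeeping for the two directions $\bP(\bw) \leftrightarrow \bP(\bw^*)$, which the paper leaves implicit.
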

\begin{proof}
The map $\sICFT$ is the inverse of $\sT$, but by Theorem~\ref{thm:main} and~\eqref{eqn:bt-inv}, $\sT$ is an involution.
\end{proof}


\end{document}